\documentclass[12pt]{article}
\usepackage[latin1]{inputenc}
\usepackage{amsmath,amsthm,amssymb}
\usepackage{amsfonts}
\usepackage{amsmath,amsthm,amssymb,amscd}
\usepackage{latexsym}
\usepackage{color}
\usepackage{color,enumitem,graphicx}
\usepackage[colorlinks=true,urlcolor=blue,
citecolor=red,linkcolor=blue,linktocpage,pdfpagelabels,
bookmarksnumbered,bookmarksopen]{hyperref}
\usepackage{graphicx}
\usepackage{mathrsfs}
\usepackage{bm}
\usepackage{cite}

\textwidth172mm \textheight22cm \hoffset-24mm \voffset-20mm

\makeatletter \@addtoreset{equation}{section} \makeatother

\setlength{\parindent}{1em}

\newtheorem{theorem}{Theorem}[section]

\newtheorem{proposition}{Proposition}[section]
\newtheorem{lemma}{Lemma}[section]
\newtheorem{remark}{Remark}[section]

\numberwithin{equation}{section}

\allowdisplaybreaks

\begin{document}

\title{Normalized solutions for a fractional $N/s$-Laplacian Choquard equation
with exponential critical nonlinearities}
\author
{Wenjing Chen\footnote{Corresponding author.}\ \footnote{E-mail address:\, {\tt wjchen@swu.edu.cn} (W. Chen),    {\tt zxwangmath@163.com} (Z. Wang)}\ \ and    Zexi Wang\\
\footnotesize  School of Mathematics and Statistics, Southwest University,
Chongqing, 400715, P.R. China}
\date{ }
\maketitle

\begin{abstract}
{In this paper, we are concerned with the following fractional $N/s$-Laplacian Choquard equation
 \begin{align*}
   \begin{cases}
   (-\Delta)^s_{N/s}u=\lambda |u|^{\frac{N}{s}-2}u +(I_\mu*F(u))f(u),\ \  \mbox{in}\ \mathbb{R}^N,\\
   \displaystyle\int_{\mathbb{R}^N}|u|^{N/s} \mathrm{d}x=a^{N/s},
   \end{cases}
   \end{align*}
where $s\in(0,1)$, $a>0$ is a prescribed constant, $\lambda\in \mathbb{R}$, $I_\mu(x)=\frac{1}{|x|^{\mu}}$ with $\mu\in(0,N)$, $F$ is the primitive function of $f$, and $f$ is a continuous function with exponential critical growth of Trudinger-Moser type. Under some suitable assumptions on $f$, we prove that the above problem admits a radial solution for any given $a>0$, by using the mountain-pass argument. 
}

\smallskip
\emph{\bf Keywords:} Normalized solutions; Fractional $N/s$-Laplacian; Exponential critical growth.
\end{abstract}

\section{{\bfseries Introduction}}\label{introduction}
This paper is devoted to the following fractional $N/s$-Laplacian Choquard equation
\begin{align}\label{pro1}
   (-\Delta)^s_{N/s}u=\lambda |u|^{\frac{N}{s}-2}u +(I_\mu*F(u))f(u),\ \  \mbox{in}\ \mathbb{R}^N,
   \end{align}
where $s\in(0,1)$, $\lambda\in \mathbb{R}$, $I_\mu(x)=\frac{1}{|x|^\mu}$ with $\mu\in(0,N)$, $F$ is the primitive function of $f$, and $f$ is a continuous function with exponential critical growth.
$(-\Delta)^s_p$ is the fractional $p$-Laplacian operator defined by
\begin{align*}
(-\Delta )^s_pu(x):=C(N,s)\lim\limits_{\varepsilon\rightarrow 0^+}\int_{\mathbb{R}^N\backslash B_\varepsilon(x)}\frac{|u(x)-u(y)|^{p-2}(u(x)-u(y))}{|x-y|^{N+ps}}\mathrm{d}y, \quad \text {in $\mathbb{R}^N$},
\end{align*}
for $u\in C_0^\infty(\mathbb{R}^N)$,
where $B_\varepsilon(x)$ denotes the ball of radius $\varepsilon$ around $x$ in $\mathbb{R}^N$. 
For further considerations about the fractional $p$-Laplacian operator,
we refer the readers to \cite{di}.

We emphasize that the fractional $p$-Laplacian is nonlinear and nonlocal, so they bring additional difficulties. Another motivation to investigate \eqref{pro1} comes from the Choquard nonlinearity involving exponential critical growth. For $s\in(0,1)$, if $N>ps$, we know the classical Sobolev embedding that $W^{s,p}(\mathbb{R}^N)\hookrightarrow L^q(\mathbb{R}^N)$ is continuous for $q\in[p,p_s^*]$, where $p_s^*:=Np/({N-ps})$ is called the Sobolev critical exponent. However, if $N=ps$, then $p_s^*=\infty$ and $W^{s,N/s}(\mathbb{R}^N)$ is not continuously embedded in $L^\infty(\mathbb{R}^N)$, see \cite{di} for more details. In the case $N=ps$, the maximum growth that allows us to treat \eqref{pro1} variationally in the Sobolev space $W^{s,N/s}(\mathbb{R}^N)$, which is motivated by
the fractional Trudinger-Moser inequality first given by Ozawa \cite{ozawa} and later improved by Kozono et al. \cite{kozono} (see also \cite{Sk,I,Z}). More precisely, inspired by \cite{kozono}, we say $f(t)$ has exponential critical growth if there exists $\alpha_0>0$ such that
\begin{align*}
\lim\limits_{|t|\to+\infty}\frac{|f(t)|}{e^{\alpha |t|^{\frac{N}{N-s}}}}=
\begin{cases}
0,\quad  &\mbox{for $\alpha>\alpha_0$,}\\
+\infty,\quad &\mbox{for $0<\alpha <\alpha_0$.}
\end{cases}
\end{align*}


To get solutions of $(\ref{pro1})$, one way is to fix $\lambda\in \mathbb{R}$ and look for 
 critical points of the energy functional $ \widetilde{\mathcal{J}}:W^{s,N/s}(\mathbb{R}^N)\rightarrow \mathbb{R}$ (see e.g. \cite{BM1,de,LY1,YRCW})
\begin{align*}
\widetilde{ \mathcal{J}}(u)=&\frac{s}{N} \int_{\mathbb{R}^N}\int_{\mathbb{R}^N}\frac{|u(x)-u(y)|^{N/s}}{|x-y|^{2N}}\mathrm{d}x\mathrm{d}y
-\frac{s\lambda}{N}\int_{\mathbb{R}^N} |u|^{N/s}\mathrm{d}x-\frac{1}{2} \int_{\mathbb{R}^N}(I_\mu*F(u))F(u)\mathrm{d}x,
\end{align*}
where 
$W^{s,N/s}(\mathbb{R}^N)$ is defined by
\begin{equation*}
  W^{s,N/s}(\mathbb{R}^N)=\big\{u\in L^{N/s}(\mathbb{R}^N):[u]_{s,N/s}<\infty\big\},
\end{equation*}
here $[u]_{s,N/s}$ denotes the Gagliardo seminorm, that is
\begin{equation*}
  [u]_{s,N/s}=\bigg(\int_{\mathbb{R}^N}\int_{\mathbb{R}^N}\frac{|u(x)-u(y)|^{N/s}}{|x-y|^{2N}}\mathrm{d}x\mathrm{d}y\bigg)^{s/{N}}.
\end{equation*}
It is well known that the space $\big(W^{s,N/s}(\mathbb{R}^N),\|\cdot\|_{W^{s,N/s}}\big)$, where $\|\cdot\|_{W^{s,N/s}}^{N/s}=[\cdot]_{s,N/s}^{N/s}+\|\cdot\|_{N/s}^{N/s}$, is a uniformly
convex Banach space, particularly reflexive, and separable (see \cite{PXZ}). We also remind the readers that $C_0^\infty(\mathbb{R}^N)$ is dense in $W^{s,N/s}(\mathbb{R}^N)$ (see \cite{AF}).

Alternatively, from a physical point of view, it is interesting to find solutions of $(\ref{pro1})$ having prescribed mass
 \begin{align}\label{pro2}
   \displaystyle\int_{\mathbb{R}^N}|u|^{N/s} \mathrm{d}x=a^{N/s}, \quad \text{with $a>0$}.
   \end{align}
In this direction, the parameter $\lambda\in \mathbb{R}$ appears as a Lagrange multiplier, which depends on the solution and is not a priori given.  
This type of solution is called the normalized solution, and can be obtained by
 looking for critical points of the following energy functional
\begin{align*}
\mathcal{ J}(u)=\frac{s}{N} \int_{\mathbb{R}^N}\int_{\mathbb{R}^N}\frac{|u(x)-u(y)|^{N/s}}{|x-y|^{2N}}\mathrm{d}x\mathrm{d}y-\frac{1}{2} \int_{\mathbb{R}^N}(I_\mu*F(u))F(u)\mathrm{d}x
\end{align*}
on the $L^{N/s}$-sphere $$ S(a)=\Big\{u\in W^{s,N/s}(\mathbb{R}^N):\int_{\mathbb{R}^N}|u|^{N/s} \mathrm{d}x=a^{N/s}\Big\}.$$


In the past few years, many researchers have studied normalized solutions for nonlinear elliptic equations, see \cite{alvesji,CSW,soave1,jeanjean,jeanjeanlu,soave,soave3} for Laplacian equations, \cite{DW,LQL,WLZL,ZZ2} for $p$-Laplacian equations.
Considering the following nonlinear Schr\"{o}dinger equation with a $L^2$-constraint
\begin{align}\label{jj}
  \begin{split}
  \left\{
  \begin{array}{ll}
   -\Delta u=\lambda u+f(u),
   \ \  \mbox{in}\ \mathbb{R}^N,\\
    \displaystyle\int_{\mathbb{R}^N}|u|^2\mathrm{d}x=a^2.
    \end{array}
    \right.
  \end{split}
  \end{align}
If $f(u)=|u|^{p-2}u$, $p\in (2,2^*]$ with $2^*=\frac{2N}{N-2}$ if $N\geq 3$, $2^*=+\infty$ if $N=1,2$, by using the Gagliardo-Nirenberg inequality \cite{Niren}, a new critical $2+\frac{4}{N}$ which is called $L^2$-critical exponent appears. In this case, the associated energy functional of \eqref{jj} is defined by
  \begin{equation*}
    \widehat{\mathcal{J}}(u)=\frac{1}{2}\int_{\mathbb{R}^N}|\nabla u|^2\mathrm{d}x-\frac{1}{p}\int_{\mathbb{R}^N}|u|^p\mathrm{d}x.
  \end{equation*}
  If $2<p<2+\frac{4}{N}$ ($L^2$-subcritical), then $\widehat{\mathcal{J}}$ is bounded from below on $ \widehat{S}(a):=\big\{u\in H^1(\mathbb{R}^N):\int_{\mathbb{R}^N}|u|^2 \mathrm{d}x=a^2\big\}$, so we can try to find a global minimizer of $\widehat{\mathcal{J}}|_{\widehat{S}(a)}$ as a solution of \eqref{jj}, see e.g. \cite{CL,Shibata}. While, if $2+\frac{4}{N}< p\leq2^*$ ($L^2$-supercritical), $\widehat{\mathcal{J}}$ is unbounded from below on $\widehat{S}(a)$, so it seems impossible to search for a global minimizer to obtain a solution.
  Jeanjean \cite{jeanjean} first obtained a normalized solution of \eqref{jj} when $f$ has a $L^2$-supercritical growth.
By using a minimax principle,
Bartsch and Soave \cite{soave1} presented a new approach, which is based on a natural constraint, and also proved the existence of normalized solutions in this case. Inspired by \cite{jeanjean,soave1}, Soave \cite{soave} studied \eqref{jj} with $f(u)=\omega|u|^{q-2}u+|u|^{r-2}u$, $\omega\in \mathbb{R}$, $2<q\leq 2+4/N\leq r<2^*$ and $q<r$.
Existence and asymptotic properties of normalized solutions, as well as stability and instability results were established in \cite{soave}. The Sobolev critical case $r=2^*$ with $N\geq3$ was considered by Soave \cite{soave3}. In the case $N=2$ and $f$ has exponential critical growth, the existence of normalized solutions of \eqref{jj} has been discussed by Alves et al. \cite{alvesji}.
Besides, Chen et al. \cite{CSW} considered normalized solutions of Schr\"{o}dinger equations with exponential critical nonlinearities in $\mathbb{R}$.


For $p$-Laplacian equations, \cite{ZZ2} is the first paper to study normalized solutions of $p$-Laplacian equations with a $L^p$-constraint
\begin{align}\label{Zhi}
   \begin{cases}
   -\Delta_pu=\lambda |u|^{p-2}u +\omega |u|^{q-2}u+f(u),\ \  \mbox{in}\ \mathbb{R}^N,\\
  \displaystyle\int_{\mathbb{R}^N}|u|^p \mathrm{d}x=a^p,
  \end{cases}
  \end{align}
  where $1<p<N$, $p<q<\bar{p}:=p+p^2/{N}$ ($L^p$-critical exponent), $\omega\in \mathbb{R}$, and $f\in C(\mathbb{R},\mathbb{R})$ is $L^p$-supercritical. Under some suitable assumptions on $\omega,q$ and $f$, the authors established the existence of least energy solutions, the multiplicity of radial solutions and nonradial sign-changing solutions of \eqref{Zhi}.
Furthermore, Deng and Wu \cite{DW} considered \eqref{Zhi}
with $f(u)=|u|^{p^*-2}u$, $p<q<p^*:=Np/(N-p)$. Using the Ekeland variational principle and minimax argument, they obtained several existence results and asymptotic behaviours. Moreover, the multiplicity of radial solutions was also established by genus theory when $p<q<\bar{p}$.

Motivated by the results already mentioned above, especially \cite{alvesji,DW}, a natural question is whether the normalized solutions exist for problem \eqref{pro1} -\eqref{pro2}.  To reach the conclusion, we
give the following assumptions.

\quad Assume that $f$ satisfies:\\
$(f_1)$   $f\in C(\mathbb{R},\mathbb{R})$, and $\lim\limits_{t\to0}\frac{|f(t)|}{|t|^\kappa}=0$ for some $\kappa>\frac{3N-2s-\mu}{2s}$;\\
$(f_2)$  $f$ has exponential critical growth at infinity, i.e., there exists $\alpha_0>0$ such that
\begin{align*}
\lim\limits_{|t|\to+\infty}\frac{|f(t)|}{e^{\alpha |t|^{\frac{N}{N-s}}}}=
\begin{cases}
0,\quad  &\mbox{for $\alpha>\alpha_0$,}\\
+\infty,\quad &\mbox{for $0<\alpha <\alpha_0$;}
\end{cases}
\end{align*}
$(f_3)$ there exists a constant $\theta>\frac{3N-\mu}{2s}$ 
 such that $0<\theta F(t)\leq tf(t)$ for any
 $t\neq 0$;\\
$(f_4)$ there exist $\tau>0$ and $\sigma>0$ such that $F(t)\geq \frac{\tau}{\sigma}|t|^\sigma$ for any $t\in \mathbb{R}$.





Our main result can be stated as follows:

\begin{theorem}\label{thm1.1}
Assume that  $(f_1)$-$(f_4)$ hold, then there exists $\tau_*>0$ such that for any $\tau\geq \tau_*$, problem $(\ref{pro1})$-\eqref{pro2} has a radial solution.
\end{theorem}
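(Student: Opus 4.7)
The plan is to adapt Jeanjean's scaling/minimax approach to the fractional $N/s$-setting, combined with radial compactness and a sub-threshold estimate from the fractional Trudinger-Moser inequality. I work in $W^{s,N/s}_{\mathrm{rad}}(\mathbb{R}^N)$, where the embedding into $L^q(\mathbb{R}^N)$ is compact for every $q\in(N/s,\infty)$, and use the $L^{N/s}$-preserving scaling $(t\star u)(x):=e^{st}u(e^{t}x)$. A direct computation gives $[t\star u]_{s,N/s}^{N/s}=e^{tN}[u]_{s,N/s}^{N/s}$, while the Choquard bilinear form transforms as $e^{t(\mu-2N)}\int\int |x-y|^{-\mu}F(e^{st}u(x))F(e^{st}u(y))\,\mathrm{d}x\,\mathrm{d}y$. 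I introduce the augmented functional $\widetilde{\mathcal{J}}(t,u):=\mathcal{J}(t\star u)$ on $\mathbb{R}\times S_{\mathrm{rad}}(a)$ and the Pohozaev-type functional $P(u):=\frac{\mathrm{d}}{\mathrm{d}t}\big|_{t=0}\mathcal{J}(t\star u)$.

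\textbf{Mountain-pass structure and bounded PS sequence.} Using $(f_1)$--$(f_2)$, the Hardy-Littlewood-Sobolev inequality and the fractional Trudinger-Moser inequality, I control $\int_{\mathbb{R}^N}(I_\mu*F(u))F(u)\,\mathrm{d}x$ on a small $W^{s,N/s}$-ball by a power of $[u]_{s,N/s}$ strictly greater than $N/s$; together with the fixed $L^{N/s}$-mass, this produces a positive wall $\mathcal{J}(u)\geq c_1>0$ on a sphere $[u]_{s,N/s}=\rho_0$. The second leg of the MP geometry uses $(f_4)$: the Dirichlet part of $\mathcal{J}(t\star v)$ grows like $e^{tN}$ while, thanks to $(f_3)$ with $\theta>(3N-\mu)/(2s)$, the Choquard part grows at least like $e^{t(\mu-2N+2s\theta)}$, so $\mathcal{J}(t\star v)\to-\infty$ as $t\to+\infty$ for any fixed $v\in S_{\mathrm{rad}}(a)$. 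Jeanjean's minimax lemma then produces a radial Palais-Smale sequence $(u_n)$ at the MP level $c(a)$ that additionally satisfies $P(u_n)\to 0$; coupling $\mathcal{J}(u_n)\to c(a)$, $P(u_n)\to 0$ and $(f_3)$ (whose threshold $\theta>(3N-\mu)/(2s)$ is precisely what balances the Choquard bilinear form against the Dirichlet energy) yields boundedness of $(u_n)$ in $W^{s,N/s}(\mathbb{R}^N)$.

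\textbf{Sub-threshold estimate and passage to the limit.} The decisive step is to show $c(a)<c^{*}$, where $c^{*}$ is the threshold below which the fractional Trudinger-Moser inequality guarantees a uniform bound $\int_{\mathbb{R}^N} e^{q\alpha_0|u_n|^{N/(N-s)}}\,\mathrm{d}x\leq C$ along bounded PS sequences for some $q>1$. I insert the fiber $t\mapsto t\star v_0$ of a fixed radial $v_0\in S_{\mathrm{rad}}(a)$ into the MP definition: by $(f_4)$, the supremum over $t$ of $\mathcal{J}(t\star v_0)$ can be made as small as we like by taking $\tau$ large (the Choquard term gains a factor $\tau^{2}$), so choosing $\tau\geq \tau_{*}$ forces $c(a)<c^{*}$. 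Once this strict inequality is in force, $f(u_n)$ is equi-integrable in some $L^{\beta}$ with $\beta>1$; radial compactness gives $u_n\to u$ in $L^{q}(\mathbb{R}^N)$ for $q\in(N/s,\infty)$, and Hardy-Littlewood-Sobolev then yields $(I_\mu*F(u_n))f(u_n)\to(I_\mu*F(u))f(u)$ in the dual sense. Hence $u$ solves \eqref{pro1} weakly for some Lagrange multiplier $\lambda\in\mathbb{R}$; the identity $P(u)=0$ together with $\mathcal{J}(u)=c(a)>0$ rules out $u\equiv 0$, and a Brezis-Lieb analysis on $S(a)$, made possible by the strict sub-threshold bound, promotes weak to strong $L^{N/s}$-convergence and places $u$ on the full sphere $S(a)$.

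\textbf{Main obstacle.} The principal difficulty is the sub-threshold estimate $c(a)<c^{*}$ and the attendant compactness: one has to contend simultaneously with the nonlocal fractional $N/s$-Laplacian, the Riesz-type Choquard convolution, and exponential critical growth on an unbounded domain. Calibrating $\tau_{*}$ correctly against a Moser-type test family, and then verifying that the weak limit lies exactly on $S(a)$ (no mass escapes to infinity and no bubble forms at the critical exponential level) via a combined Pohozaev-Brezis-Lieb argument in the fractional Gagliardo seminorm, concentrates the bulk of the technical work.
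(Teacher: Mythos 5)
Your overall strategy coincides with the paper's: the same mass-preserving rescaling $e^{s\beta}u(e^{\beta}x)$, the augmented functional and Jeanjean-type minimax producing a Palais--Smale sequence with $P(u_n)\to 0$, boundedness from $(f_3)$ with $\theta>(3N-\mu)/(2s)$, the sub-threshold estimate obtained by letting $\tau\to\infty$ in the $(f_4)$ lower bound (the paper's Lemma \ref{energy} gives $m_\tau(a)\le C\,\tau^{-2N/(2s\theta+\mu+2\sigma-3N)}$), and radial compactness plus the fractional Trudinger--Moser inequality to pass to the limit in the Choquard term. Up to that point your outline matches Sections \ref{vf}--\ref{minimax} of the paper step by step.

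There is, however, a genuine gap in your endgame. The compact embedding $W^{s,N/s}_{\mathrm{rad}}(\mathbb{R}^N)\hookrightarrow L^{q}(\mathbb{R}^N)$ holds only for $q>N/s$, not at the endpoint $q=N/s$, so neither the sub-threshold bound nor a Brezis--Lieb splitting by itself prevents the $L^{N/s}$ mass from dropping in the limit: a priori $\|u\|_{N/s}<a$ is possible, and then $u$ would not lie on $S(a)$. The paper closes this by first proving that the Lagrange multipliers satisfy $\lambda_n\to\lambda<0$ (Lemma \ref{ne}): combining $P(u_n)\to0$ with the identity obtained by testing the equation against $u_n$ gives $\lambda_n a^{N/s}=-\frac{2N-\mu}{2s}\int_{\mathbb{R}^N}(I_\mu*F(u_n))F(u_n)\,\mathrm{d}x+o_n(1)$, and Fatou's lemma together with $u\neq0$ forces strict negativity. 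Only then do the two identities \eqref{con1}--\eqref{con2}, combined with the convergence of the nonlinear terms from Lemma \ref{strong2}, yield $[u_n]_{s,N/s}\to[u]_{s,N/s}$ \emph{and} $\|u_n\|_{N/s}\to\|u\|_{N/s}$ simultaneously (the sign of $\lambda$ is what makes $[\cdot]_{s,N/s}^{N/s}-\lambda\|\cdot\|_{N/s}^{N/s}$ an equivalent norm), hence $u\in S(a)$ and $u_n\to u$ strongly. You need to add this multiplier-sign step. A second, smaller point: your nonvanishing argument is circular as written, since you invoke $\mathcal{J}(u)=c(a)>0$ before strong convergence is available; the paper instead shows that $u=0$ would force, via the compactness lemma and $P(u_n)\to0$, that $[u_n]_{s,N/s}\to0$ and hence $m_\tau(a)=0$, contradicting $m_\tau(a)>0$.
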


\begin{remark}
{\rm A typical example satisfying $(f_1)$-$(f_4)$ is
\begin{align*}
f(t)=\tau|t|^{\sigma-2}te^{\alpha_0 |t|^{\frac{N}{N-s}}}
\end{align*}
for any $\sigma>\max\{\kappa+1,\theta\}$}.
\end{remark}

    This paper is organized as follows. Section \ref{sec preliminaries} contains some preliminaries. In Section \ref{vf}, we give the variational framework of problem \eqref{pro1}-\eqref{pro2}. 
In Section \ref{minimax},  we use the mountain-pass argument to construct a bounded $(PS)$ sequence. 
 Finally, in Section \ref{proof}, we give the proof of Theorem \ref{thm1.1}. Throughout this paper, we will use the notation $\|\cdot\|_q:=\|\cdot\|_{L^q(\mathbb{R}^N)}$, $q\in [1,\infty]$, $C,C_i,i\in \mathbb{N}^+$ denote positive constants possibly different from line to line.


\section{{\bfseries Preliminaries}}\label{sec preliminaries}

We start with some preliminaries. 
\begin{proposition}\label{hardy}
\cite[Theorem 4.3]{Lieb} Let $1<r,t<\infty$ and $0<\mu<N$ with $\frac{1}{r}+\frac{1}{t}+\frac{\mu}{N}=2$. If $f\in L^r(\mathbb{R}^N)$ and $h\in L^t(\mathbb{R}^N)$, then there exists a sharp constant $C(N,\mu,r,t)>0$ such that
\begin{align}\label{HLS}
\int_{\mathbb{R}^N}\int_{\mathbb{R}^N}\frac{f(x)h(y)}{|x-y|^\mu}\mathrm{d}x\mathrm{d}y\leq C(N,\mu,r,t)\|f\|_r\|h\|_t.
\end{align}
\end{proposition}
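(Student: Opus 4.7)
The plan is to follow the classical rearrangement-plus-interpolation proof of the Hardy-Littlewood-Sobolev inequality. First I would reduce to the case where $f$ and $h$ are nonnegative and symmetric decreasing: replacing $f, h$ by $|f|, |h|$ only increases the bilinear form
\[
B(f,h) := \int_{\mathbb{R}^N}\int_{\mathbb{R}^N}\frac{f(x)h(y)}{|x-y|^\mu}\,\mathrm{d}x\,\mathrm{d}y
\]
while preserving the $L^r$ and $L^t$ norms, after which the Riesz rearrangement inequality applied to the three functions $f$, $h$, and the self-symmetric kernel $|x|^{-\mu}$ yields $B(f,h) \leq B(f^{*}, h^{*})$ with $\|f^{*}\|_r = \|f\|_r$ and $\|h^{*}\|_t = \|h\|_t$, where $f^{*}$ denotes the symmetric decreasing rearrangement. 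It then suffices to prove the inequality for radially symmetric, nonincreasing $f$ and $h$.

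Next, by duality the bilinear estimate is equivalent to the linear bound $\|Tf\|_{L^{t'}} \leq C\|f\|_{L^r}$ for the Riesz potential $Tf(x) := \int_{\mathbb{R}^N} |x-y|^{-\mu} f(y)\,\mathrm{d}y$, where $t'$ is the conjugate exponent of $t$, so that the scaling condition $\frac{1}{r}+\frac{1}{t}+\frac{\mu}{N}=2$ rewrites as $\frac{1}{r} - \frac{1}{t'} = 1 - \frac{\mu}{N}$. The key observation is that $k(x) := |x|^{-\mu}$ lies in the weak Lebesgue space $L^{N/\mu,\infty}(\mathbb{R}^N)$, since $|\{k > \lambda\}| = c_N \lambda^{-N/\mu}$. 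I would establish the weak-type bound $\|Tf\|_{L^{t',\infty}} \leq C\|f\|_{L^r}$ via a layer-cake decomposition of $k$: split $k = k\chi_{\{|x|\leq R\}} + k\chi_{\{|x|>R\}}$, apply the ordinary Young convolution inequality to each piece (the first lies in $L^{a}$ for every $a < N/\mu$, the second for every $a > N/\mu$), and then optimize in $R > 0$ to control the distribution function of $Tf$.

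Finally, since $1 < r < \infty$ and $1 < t < \infty$, the Marcinkiewicz interpolation theorem applied at two nearby pairs of weak-type exponents straddling $r$ promotes the weak-type estimate into the desired strong-type bound $\|Tf\|_{L^{t'}} \leq C\|f\|_{L^r}$, and dualizing back recovers the bilinear form of the inequality with a constant $C(N,\mu,r,t)$ depending only on the displayed parameters.

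The main obstacle is the assertion that $C(N,\mu,r,t)$ is \emph{sharp}. This is the genuinely delicate part of Lieb's theorem: existence of an extremizer is obtained by combining the rearrangement reduction above with a concentration-compactness argument that accounts for the noncompact symmetry group of the functional (translations and dilations, plus in the conformal subcase $r = t = 2N/(2N-\mu)$ an additional invariance under conformal inversions), after which the extremizer is identified through its Euler-Lagrange equation. Since in the present paper only the finiteness of $C(N,\mu,r,t)$ is ever invoked (the inequality is used merely as a convolution-type tool in later estimates), the sharpness step can be cited as a black box reference to Lieb, while the rearrangement-interpolation outline above already suffices to produce the inequality itself.
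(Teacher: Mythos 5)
The paper offers no proof of this statement at all: it is imported verbatim as \cite[Theorem 4.3]{Lieb}, and only the finiteness of the constant is ever used downstream (in \eqref{tain1} and Lemma \ref{strong2}). Your outline is a correct rendering of the standard proof and would be acceptable if one insisted on self-containment. The reduction via the Riesz rearrangement inequality, the dualization to the Riesz potential bound $\|Tf\|_{t'}\leq C\|f\|_r$ under the scaling relation $\frac1r-\frac1{t'}=1-\frac\mu N$, the splitting $k=k\chi_{\{|x|\leq R\}}+k\chi_{\{|x|>R\}}$ with optimization in $R$ to get the weak-type estimate (note $k\chi_{\{|x|>R\}}\in L^{r'}$ precisely because $t>1$, which is worth saying explicitly), and the Marcinkiewicz upgrade using $t'>r$ (which holds since $\mu<N$) are all sound; the rearrangement step is actually superfluous for the inequality itself and is only needed for the existence of optimizers. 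You are also right to flag that ``sharp'' here means only the best constant, whose existence is immediate once any admissible constant is found, and that the genuinely hard content of Lieb's theorem (attainment of the sharp constant) is never invoked in this paper and can legitimately remain a citation.
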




\begin{lemma}\label{GN}\cite[Theorems 1.1-1.2]{Z1}
Let  $0<s<1$, $1<N/s<r<\infty$, $u\in W^{s,N/s}(\mathbb{R}^N)$, then there exists a sharp constant $C(N,s,r)>0$ such that
\begin{align}\label{gns}
\|u\|_r^r\leq C(N,s,r)[u]_{s,N/s}^{r-\frac{N}{s}}\|u\|_{N/s}^{N/s}.
\end{align}
\end{lemma}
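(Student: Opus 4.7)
The claimed inequality is the critical Gagliardo--Nirenberg inequality for the fractional Sobolev space $W^{s,N/s}(\mathbb{R}^N)$, and my plan is to derive it from two ingredients: the standard critical Sobolev embedding and a single one-parameter rescaling that exploits the fact that, at the critical exponent $sp=N$, the Gagliardo seminorm $[\cdot]_{s,N/s}$ is itself scale-invariant.

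First I would record the scaling behaviour under $u_\lambda(x):=u(\lambda x)$: a direct change of variables $\xi=\lambda x$, $\eta=\lambda y$, using $s\cdot(N/s)=N$, gives
\[
[u_\lambda]_{s,N/s}^{N/s}=[u]_{s,N/s}^{N/s},\qquad \|u_\lambda\|_q^q=\lambda^{-N}\|u\|_q^q\ \text{for every } q\in[1,\infty).
\]
Next I would invoke the continuous critical embedding $W^{s,N/s}(\mathbb{R}^N)\hookrightarrow L^r(\mathbb{R}^N)$ for every finite $r\geq N/s$, which is a standard consequence of the fractional Sobolev theory (see \cite{di}). This yields a constant $C_r>0$ with
\[
\|v\|_r\leq C_r\bigl(\|v\|_{N/s}+[v]_{s,N/s}\bigr)\quad\text{for all } v\in W^{s,N/s}(\mathbb{R}^N).
\]

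Given nontrivial $u$, I would choose $\lambda>0$ so that the two summands on the right agree when applied to $u_\lambda$: the choice $\lambda^N=\|u\|_{N/s}^{N/s}/[u]_{s,N/s}^{N/s}$ produces $\|u_\lambda\|_{N/s}=[u_\lambda]_{s,N/s}=[u]_{s,N/s}$, whence the embedding gives $\|u_\lambda\|_r\leq 2C_r\,[u]_{s,N/s}$. Raising to the $r$-th power and substituting
\[
\|u_\lambda\|_r^r=\lambda^{-N}\|u\|_r^r=\frac{[u]_{s,N/s}^{N/s}}{\|u\|_{N/s}^{N/s}}\|u\|_r^r
\]
and rearranging yields
\[
\|u\|_r^r\leq (2C_r)^r\,[u]_{s,N/s}^{\,r-N/s}\|u\|_{N/s}^{N/s},
\]
which is \eqref{gns} with $C(N,s,r)\leq (2C_r)^r$. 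The homogeneity check under both dilations $u\mapsto u_\lambda$ and amplitude scalings $u\mapsto tu$ shows that both sides of the target inequality scale identically, which is exactly what makes this balancing normalization available.

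The main obstacle, and the step that the argument above does not resolve, is the \emph{sharpness} of the constant, i.e.\ that the infimum
\[
C(N,s,r)^{-1}=\inf_{u\neq 0}\frac{[u]_{s,N/s}^{r-N/s}\|u\|_{N/s}^{N/s}}{\|u\|_r^r}
\]
is finite and attained. For this I would first symmetrize via the fractional P\'olya--Szeg\H{o} principle so that a minimizing sequence may be taken radial and decreasing, then exploit the two-parameter scale invariance to normalize $\|u_n\|_{N/s}$ and $[u_n]_{s,N/s}$, and finally run a concentration-compactness argument on $\mathbb{R}^N$; ruling out vanishing and dichotomy in the presence of a scale-invariant seminorm is the genuinely delicate piece, and is where one has to do the real work carried out in \cite{Z1}.
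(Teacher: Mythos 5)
The paper offers no proof of this lemma: it is quoted verbatim from \cite[Theorems 1.1--1.2]{Z1}, so there is nothing internal to compare against. Your scaling argument is correct and self-contained for what the paper actually needs. The computation $[u_\lambda]_{s,N/s}=[u]_{s,N/s}$ is right (with $p=N/s$ the kernel exponent is $N+ps=2N$ and the Jacobian factors cancel exactly), the critical embedding $W^{s,N/s}(\mathbb{R}^N)\hookrightarrow L^r(\mathbb{R}^N)$ for finite $r\geq N/s$ is standard (see \cite{di}), and the balancing choice $\lambda^N=\|u\|_{N/s}^{N/s}/[u]_{s,N/s}^{N/s}$ legitimately turns the additive embedding inequality into the multiplicative interpolation form; the degenerate case $[u]_{s,N/s}=0$ forces $u\equiv 0$ since $u\in L^{N/s}(\mathbb{R}^N)$, so the normalization is always available. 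You are right that this does not yield the \emph{sharp} constant or its attainment, which is the actual content of \cite{Z1} and would require the symmetrization and concentration-compactness machinery you sketch. But note that sharpness is never used in this paper: every invocation of \eqref{gns} (e.g.\ in Lemma \ref{minimax2}, where it bounds $\|u\|_{\frac{2N(\kappa+1)}{2N-\mu}}$ and $\|u\|_{\frac{2Nq\nu'}{2N-\mu}}$ in terms of $[u]_{s,N/s}$ and $a$) only requires \emph{some} finite constant. So your elementary derivation fully supports the role the lemma plays here, at the cost of not reproving the optimality statement that the citation carries.
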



\begin{lemma}\label{tm}
(i) \cite{kozono} For any $\alpha>0$ and $u\in W^{s,N/s}(\mathbb{R}^N)$, we have
\begin{equation*}
  \int_{\mathbb{R}^N}\Psi(\alpha |u(x)|^{N/(N-s)})\mathrm{d}x<+\infty;
\end{equation*}
(ii) \cite[Theorem 1.7]{Sk} Let $0<s<1<N/s$, then there exists $\alpha_*>0$ such that
\begin{align*}
  \sup_{u\in W^{s,N/s}(\mathbb{R}^N)\backslash \{0\},[u]_{s,N/s} \leq1}\frac{1}{\|u\|_{N/s}^{N/s}}\int_{\mathbb{R}^N}\Psi(\alpha |u(x)|^{N/(N-s)})\mathrm{d}x
  \begin{cases}
  <\infty, \quad \alpha<\alpha_*,\\
  =\infty, \quad \alpha\geq \alpha_*,
  \end{cases}
 \end{align*}
 where $\Psi(t)=e^t-\sum\limits_{j=0}^{j_{N,s}-2}\frac{t^j}{j!}$ and $j_{N,s}=\min\big\{j\in \mathbb{N}^+:j\geq N/s\big\}$.
\end{lemma}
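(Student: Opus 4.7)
The plan is to obtain a critical point of $\mathcal{J}$ restricted to the radial sphere $S_{\mathrm{rad}}(a):=W^{s,N/s}_{\mathrm{rad}}(\mathbb{R}^N)\cap S(a)$ by a constrained mountain-pass argument combined with Jeanjean's monotonicity trick. Since $(f_3)$ places the nonlinearity in the $L^{N/s}$-supercritical regime (note $\theta>\frac{3N-\mu}{2s}$), the functional $\mathcal{J}$ is unbounded from below on $S(a)$, so a global minimizer cannot be sought. I would instead exploit the mass-preserving scaling $u_t(x):=t^s u(tx)$, under which $[u_t]_{s,N/s}^{N/s}=t^N[u]_{s,N/s}^{N/s}$ and $\|u_t\|_{N/s}=\|u\|_{N/s}$, while the Choquard energy scales by a factor that, in view of $(f_1)$ for the small-$t$ regime and $(f_3)$-$(f_4)$ for the large-$t$ regime, is strictly dominated by $t^N$ as $t\to 0^+$ and strictly dominates $t^N$ as $t\to\infty$. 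This produces the required mountain-pass geometry along the fibers $\{u_t\}_{t>0}\subset S(a)$.

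Working in the radial subspace to regain compactness through the Strauss-type compact embeddings of $W^{s,N/s}_{\mathrm{rad}}(\mathbb{R}^N)$ into suitable Lebesgue/Orlicz spaces, I would follow Jeanjean by introducing the augmented functional $\widehat{\mathcal{J}}(\eta,u):=\mathcal{J}(u_{e^\eta})$ on $\mathbb{R}\times S_{\mathrm{rad}}(a)$ and defining the mountain-pass level $c_a:=\inf_{\gamma\in\Gamma}\max_{t\in[0,1]}\widehat{\mathcal{J}}(\gamma(t))$, where $\Gamma$ consists of continuous paths linking a point of small energy to a point in $\{\mathcal{J}<0\}$. A standard deformation argument on the product manifold, combined with the monotonicity trick applied to a one-parameter perturbed family, yields a bounded Palais-Smale sequence $\{u_n\}\subset S_{\mathrm{rad}}(a)$ for $\mathcal{J}|_{S_{\mathrm{rad}}(a)}$ at level $c_a$ which in addition satisfies an asymptotic Pohozaev-type identity $P(u_n)\to 0$. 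This identity, together with $(f_3)$, is what enforces boundedness of $[u_n]_{s,N/s}$, after which the Trudinger-Moser inequality of Lemma \ref{tm} will control all the nonlinear terms.

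The decisive step, where $(f_4)$ and the assumption $\tau\geq\tau_*$ enter, is to bring $c_a$ strictly below the compactness threshold tied to the constant $\alpha_*$ of Lemma \ref{tm}(ii). Using the pointwise bound $F(t)\geq\frac{\tau}{\sigma}|t|^\sigma$ together with the Hardy-Littlewood-Sobolev inequality of Proposition \ref{hardy}, testing the scaling computation against a suitable radial profile yields an upper estimate of the form $c_a\leq C\,\tau^{-\beta}$ with some $\beta>0$ independent of $\tau$, so $\tau_*$ can be chosen to push $c_a$ below the threshold. Once $c_a$ is below threshold, the Trudinger-Moser bounds provide uniform $L^q$-integrability of $f(u_n)$ and $F(u_n)$ for appropriate $q$, and via HLS combined with Vitali's convergence theorem one can pass to the weak limit in the nonlocal Choquard term. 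The Lagrange multipliers $\lambda_n$ extracted from the constraint are shown (via the asymptotic Pohozaev identity and $(f_3)$) to converge to some $\lambda\in\mathbb{R}$ of definite sign, ruling out vanishing, preserving the mass $\|u\|_{N/s}=a$, and yielding the desired pair $(u,\lambda)$ solving \eqref{pro1}-\eqref{pro2}. I expect the main technical obstacle to be the quantitative upper bound on $c_a$: the mountain-pass path must be tested against a carefully chosen family of radial Moser-type functions whose interplay with both the fractional Gagliardo seminorm and the nonlocal convolution kernel requires a delicate balancing of exponents.
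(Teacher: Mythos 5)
Your proposal does not address the statement at hand. The statement is Lemma \ref{tm}, the fractional Trudinger--Moser inequality in $W^{s,N/s}(\mathbb{R}^N)$: part (i) asserts the finiteness of $\int_{\mathbb{R}^N}\Psi(\alpha|u|^{N/(N-s)})\mathrm{d}x$ for every $\alpha>0$ and every fixed $u$, and part (ii) asserts the existence of a sharp threshold $\alpha_*$ for the uniform bound over the unit ball of the Gagliardo seminorm, normalized by $\|u\|_{N/s}^{N/s}$. What you have written instead is an outline of the constrained mountain-pass / monotonicity-trick strategy for the main existence result, Theorem \ref{thm1.1}; indeed your text explicitly invokes ``the Trudinger--Moser inequality of Lemma \ref{tm}'' and ``the constant $\alpha_*$ of Lemma \ref{tm}(ii)'' as tools, which makes clear you are proving a different statement and treating the one you were asked about as a known input.

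For the record, the paper itself offers no proof of Lemma \ref{tm}: it is quoted from the literature, part (i) from Kozono, Sato and Wadade \cite{kozono} (building on Ozawa \cite{ozawa}) and part (ii) from \cite[Theorem 1.7]{Sk}. A genuine proof would require entirely different machinery from what you describe --- typically a power-series/interpolation argument reducing $\Psi$ to a sum of $L^q$ norms controlled via the fractional Gagliardo--Nirenberg inequality for part (i), and a symmetrization plus sharp capacity-type estimate (together with a Moser-sequence construction to show failure at $\alpha\geq\alpha_*$) for part (ii). None of the mountain-pass, Pohozaev, or Lagrange-multiplier apparatus in your proposal bears on either point, so as a proof of Lemma \ref{tm} the proposal is vacuous.
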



\begin{lemma} \cite[Lemma 4.8]{Kavian}\label{weakcon}
Let $\Omega \subseteq \mathbb{R}^N$ be any open set. For $1<t<\infty$, let $\{u_n\}$ be bounded in $L^t(\Omega)$ and $u_n(x)\to u(x)$ a.e. in $\Omega$. Then $u_n(x) \rightharpoonup u(x)$ in $L^t(\Omega)$.
\end{lemma}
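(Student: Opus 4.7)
The plan is to prove this by combining the reflexivity of $L^t(\Omega)$ with the a.e.\ pointwise information, using Egorov's theorem as the bridge between the two. First, I would confirm that the pointwise limit $u$ actually lies in $L^t(\Omega)$: since $|u_n|^t\to|u|^t$ almost everywhere and $\sup_n\|u_n\|_t\leq M$ for some $M>0$, Fatou's lemma gives $\int_\Omega|u|^t\,dx\leq\liminf_{n\to\infty}\int_\Omega|u_n|^t\,dx\leq M^t$, so indeed $u\in L^t(\Omega)$.

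Next, because $1<t<\infty$ the space $L^t(\Omega)$ is reflexive, so any subsequence of the bounded sequence $\{u_n\}$ admits a further subsequence, still denoted $\{u_{n_k}\}$, such that $u_{n_k}\rightharpoonup v$ in $L^t(\Omega)$ for some $v\in L^t(\Omega)$. The central task is then to identify $v$ with $u$. Fixing $\phi\in C_c^\infty(\Omega)$ with compact support $K$, for any $\epsilon>0$ Egorov's theorem produces $K_\epsilon\subseteq K$ with $|K\setminus K_\epsilon|<\epsilon$ on which $u_{n_k}\to u$ uniformly. Splitting
\[
\int_\Omega\phi(u_{n_k}-u)\,dx=\int_{K_\epsilon}\phi(u_{n_k}-u)\,dx+\int_{K\setminus K_\epsilon}\phi(u_{n_k}-u)\,dx,
\]
the first integral tends to $0$ by uniform convergence, while H\"older's inequality combined with the uniform $L^t$-bound controls the second by $C\|\phi\|_\infty\,\epsilon^{1/t'}$ (with $1/t+1/t'=1$). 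Letting $k\to\infty$ and then $\epsilon\to 0$ yields $\int_\Omega\phi u_{n_k}\,dx\to\int_\Omega\phi u\,dx$, which compared with the weak convergence $\int_\Omega\phi u_{n_k}\,dx\to\int_\Omega\phi v\,dx$ forces $\int_\Omega\phi(u-v)\,dx=0$ for every $\phi\in C_c^\infty(\Omega)$; by density of $C_c^\infty(\Omega)$ in $L^{t'}(\Omega)$, this gives $u=v$ almost everywhere.

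Finally, a subsequence-of-subsequence argument upgrades this to weak convergence of the full sequence: if it failed, some $\psi\in L^{t'}(\Omega)$ and $\delta>0$ would produce a subsequence along which $\bigl|\int_\Omega\psi(u_{n_k}-u)\,dx\bigr|\geq\delta$, but that subsequence itself would have, by the argument above, a further weakly convergent sub-subsequence with limit $u$, a contradiction. The main delicate point I expect is the identification step: the argument must be localized via the compact support of the test function so that Egorov (which requires finite measure) is applicable, while the small-measure exceptional set $K\setminus K_\epsilon$ is tamed by H\"older together with the uniform $L^t$-bound. This localization is precisely what makes the statement valid on a general open set $\Omega$ with no assumption of finite measure.
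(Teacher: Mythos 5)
Your proof is correct and complete. Note that the paper does not prove this lemma at all --- it is quoted verbatim from Kavian \cite[Lemma 4.8]{Kavian} --- so there is no in-paper argument to compare against; your route (Fatou to place $u$ in $L^t(\Omega)$, reflexivity to extract a weak limit $v$, Egorov on the compact support of a test function plus H\"older on the small exceptional set to identify $v=u$, and the subsequence-of-subsequences principle to upgrade to the full sequence) is the standard proof of this fact, and you correctly isolate the one delicate point, namely that Egorov must be localized to a finite-measure set since $\Omega$ may have infinite measure.
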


  \section{{\bfseries Variational framework}}\label{vf}
First of all, to make the notation concise, for $\alpha>\alpha_0$ and $t\in \mathbb{R}$, we set
\begin{equation*}
  R(\alpha,t)=e^{\alpha |t|^{\frac{N}{N-s}}}-\sum\limits_{j=0}^{j_{N,s}-2}\frac{\alpha^j}{j!}|t|^{\frac{Nj}{N-s}}=\sum\limits^{\infty}_{j_{N,s}-1}\frac{\alpha^j}{j!}|t|^{\frac{Nj}{N-s}},
\end{equation*}
where $j_{N,s}=\min\{j\in \mathbb{N}^+:j\geq N/s\}$.

  By using assumptions $(f_1)$ and $(f_2)$, it follows that for any $\zeta>0$, $q>\frac{3N}{2s}$ and $\alpha>\alpha_0$, there exists $C>0$ such that
\begin{align*}
|f(t)|\leq \zeta |t|^{\kappa}+C|t|^{q-1}R(\alpha,t), \quad \mbox{for all $t\in\mathbb{R}$,}
\end{align*}
and using $(f_3)$, we have
\begin{align}\label{Ft}
|F(t)|\leq \zeta |t|^{\kappa+1}+C|t|^{q} R(\alpha,t), \quad \mbox{for all $t\in\mathbb{R}$.}
\end{align}
Using \eqref{HLS}, \eqref{Ft} and Lemma \ref{tm}, we know $\mathcal{J}$ is well defined in $W^{s,N/s}(\mathbb{R}^N)$ and of class $C^1$ with
\begin{align*}
\langle \mathcal{J}'(u),v\rangle=&
\int_{\mathbb{R}^N}\int_{\mathbb{R}^N}\frac{|u(x)-u(y)|^{\frac{N}{s}-2}[u(x)-u(y)][v(x)-v(y)]}{|x-y|^{2N}}\mathrm{d}x\mathrm{d}y-\int_{\mathbb{R}^N}(I_\mu*F(u))f(u)v \mathrm{d}x,
\end{align*}
for any $u,v\in W^{s,N/s}(\mathbb{R}^N)$. Hence, a critical point of $\mathcal{J}|_{S(a)}$
corresponds to a solution of problem \eqref{pro1}-\eqref{pro2}.

To understand the geometry of $\mathcal{J}|_{S(a)}$, for any $\beta\in \mathbb{R}$ and $u\in W^{s,N/s}(\mathbb{R}^N)$, we define
\begin{equation*}
  \mathcal{H}(u,\beta)(x):=e^{s\beta}u(e^\beta x),\quad \text{for a.e. $x\in \mathbb{R}^N$}.
\end{equation*}
One can easily check that $\| \mathcal{H}(u,\beta)\|_{N/s}=\|u\|_{N/s}$ for any $\beta\in \mathbb{R}$. Thus, we can investigate the structure of $\mathcal{J}(\mathcal{H}(u,\beta))$ to speculate the structure of $\mathcal{J}|_{S(a)}$. Denote $S_r(a)=S(a)\cap W_{rad}^{s,N/s}(\mathbb{R}^N)$, where $W_{rad}^{s,N/s}(\mathbb{R}^N)$ is the subset of the radially symmetric functions in $W^{s,N/s}(\mathbb{R}^N)$.

 \begin{lemma}\label{strong1}
Assume that $(f_1)$-$(f_3)$ hold. Let $\{u_n\}\subset S_r(a)$ be a sequence 
 and satisfy $\sup\limits_{n\in\mathbb{N}^+}[u_n]_{s,N/s}^{N/s}<\big[\frac{(2N-\mu)\alpha_*}{2N \alpha_0}\big]^{(N-s)/{s}}$. If $u_n\rightharpoonup u$ in $W_{rad}^{s,N/s}(\mathbb{R}^N)$, then there exists $\alpha>\alpha_0$ close to $\alpha_0$ such that for all $q>1$, there holds
\begin{equation*}
 \int_{\mathbb{R}^N}|u_n|^{q}R(\alpha,u_n)\mathrm{d}x\rightarrow  \int_{\mathbb{R}^N}|u|^{q}R(\alpha,u)\mathrm{d}x,\,\,\,\text{as}\,\,\, n\rightarrow\infty.
\end{equation*}
\end{lemma}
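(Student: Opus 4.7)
The plan is to combine the fractional Trudinger--Moser inequality (Lemma \ref{tm}(ii)) with the radial compact embedding of $W_{rad}^{s,N/s}(\mathbb{R}^N)$ and Vitali's convergence theorem. The decisive point is that the strict inequality $\sup_n [u_n]_{s,N/s}^{N/s} < [(2N-\mu)\alpha_*/(2N\alpha_0)]^{(N-s)/s}$ in the hypothesis leaves slack to raise the Trudinger--Moser exponent slightly above $\alpha_0$ while still remaining subcritical with respect to $\alpha_*$.

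First I would set $M := \sup_n [u_n]_{s,N/s}^{N/s}$ and choose $\alpha > \alpha_0$ close to $\alpha_0$ together with $p > 1$ close to $1$ (depending on the fixed $q$) so that both
$$
p\alpha M^{s/(N-s)} < \alpha_* \quad \text{and} \quad qp' > N/s
$$
hold, where $p' = p/(p-1)$. The first condition is possible because the hypothesis gives $\alpha_0 M^{s/(N-s)} < (2N-\mu)\alpha_*/(2N) < \alpha_*$ strictly; the second is unconditional when $sq \geq N$ and requires only $p < N/(N-sq)$ when $sq < N$, which is compatible with $p$ close to $1$.

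Next, I would establish higher integrability of $R(\alpha,u_n)$. Applying Lemma \ref{tm}(ii) to $v_n := u_n/[u_n]_{s,N/s}$ with exponent $\alpha' := p\alpha [u_n]_{s,N/s}^{N/(N-s)} \leq p\alpha M^{s/(N-s)} < \alpha_*$ yields $\int_{\mathbb{R}^N} R(p\alpha, u_n)\,dx \leq C$ uniformly in $n$. A pointwise comparison $R(\alpha,t)^p \leq C_p R(p\alpha,t)$ valid for all $t \in \mathbb{R}$ — verified by checking the leading monomial behaviour at $t=0$ (both sides vanish, with $R(\alpha,t)^p$ vanishing to higher order) and the matching exponential rate as $|t| \to \infty$ — then upgrades this to $\int_{\mathbb{R}^N} R(\alpha,u_n)^p\,dx \leq C$.

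Finally, since $qp' > N/s$, the radial compact embedding $W_{rad}^{s,N/s}(\mathbb{R}^N) \hookrightarrow L^{qp'}(\mathbb{R}^N)$ is compact, giving $u_n \to u$ strongly in $L^{qp'}$ and, along a subsequence, a.e., so that $|u_n|^q R(\alpha,u_n) \to |u|^q R(\alpha,u)$ a.e. Vitali's convergence theorem then closes the argument: for any measurable $A \subset \mathbb{R}^N$, Hölder gives
$$
\int_A |u_n|^q R(\alpha,u_n)\,dx \leq \|u_n\|_{L^{qp'}(A)}^q \left(\int_{\mathbb{R}^N} R(\alpha,u_n)^p\,dx\right)^{1/p},
$$
and the $L^{qp'}$ strong convergence makes the first factor small uniformly in $n$ both when $|A|$ is small (equi-integrability) and when $A = \{|x|>R\}$ for $R$ large (tightness). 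The main obstacle is this coupled parameter choice — arranging $\alpha$ just above $\alpha_0$ and $p$ just above $1$ so that both the subcritical Trudinger--Moser constraint and $qp' > N/s$ hold simultaneously — since everything downstream hinges on it; the pointwise comparison and the Vitali application are then routine.
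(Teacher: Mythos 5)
Your proposal is correct and follows essentially the same route as the paper: exploit the strict gap between $\alpha_0\sup_n[u_n]_{s,N/s}^{N/(N-s)}$ and $\alpha_*$ to choose $\alpha>\alpha_0$ and an exponent $p>1$ (the paper's $\nu$) giving a uniform $L^p$ bound on $R(\alpha,u_n)$ via the fractional Trudinger--Moser inequality, then use the compact radial embedding into $L^{qp'}$. The only difference is the closing step: the paper splits the difference of the two integrals into two terms and invokes weak $L^\nu$-convergence of $R(\alpha,u_n)$ (from a.e.\ convergence plus the uniform bound, Lemma \ref{weakcon}) together with H\"older, whereas you conclude by Vitali's theorem using equi-integrability and tightness extracted from the same H\"older estimate --- both closings are valid.
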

\begin{proof}
First, we claim that there exist $\alpha>\alpha_0$ close to $\alpha_0$, $\nu>1$ close to $1$ such that $R(\alpha,u_n)$ is uniformly bounded in $L^\nu(\mathbb{R}^N)$. Indeed, fix $\alpha>\alpha_0$ close to $\alpha_0$,  there exists $\nu>1$ close to $1$ such that
\begin{equation*}
   \nu\alpha\sup\limits_{n\in\mathbb{N}^+}[u_n]_{s,N/s}^{N/(N-s)}<\alpha_*.
 \end{equation*}
Hence,  by Lemma \ref{tm}, we have%
\begin{equation*}
  \int_{\mathbb{R}^N}R(\alpha,u_n)^\nu\mathrm{d}x=\int_{\mathbb{R}^N}R\Big(\alpha[u_n]_{s,N/s}^{N/(N-s)},\frac{u_n}{[u_n]_{s,N/s}}\Big)^\nu\mathrm{d}x\leq \int_{\mathbb{R}^N}R\Big(\nu\alpha[u_n]_{s,N/s}^{N/(N-s)},\frac{u_n}{[u_n]_{s,N/s}}\Big)\mathrm{d}x\leq C,
 \end{equation*}
 this proves the claim.
 Since $u_n\rightharpoonup u$ in $W_{rad}^{s,N/s}(\mathbb{R}^N)$, then $u_n\rightarrow u$ a.e. in $\mathbb{R}^N$. By Lemma \ref{weakcon}, we obtain
 $R(\alpha,u_n)\rightharpoonup R(\alpha,u)$ in $L^\nu(\mathbb{R}^N)$. Moreover, for $\nu'=\frac{\nu}{\nu-1}$, using the compact  embedding $W_{rad}^{s,N/s}(\mathbb{R}^N)\hookrightarrow L^{q\nu'}(\mathbb{R}^N)$, we derive that $u_n\rightarrow u$ in $L^{q\nu'}(\mathbb{R}^N)$, and so
 $|u_n|^{q}\rightarrow |u|^{q}$ in $L^{\nu'}(\mathbb{R}^N)$ as $n\rightarrow\infty$.
Thus, by the definition of weak convergence, using the H\"{o}lder inequality, we infer that
\begin{align*}
  &\Big|\int_{\mathbb{R}^N}|u_n|^{q}R(\alpha,u_n)\mathrm{d}x-\int_{\mathbb{R}^N}|u|^{q}R(\alpha,u)\mathrm{d}x\Big|\\ \leq &\int_{\mathbb{R}^N}\big||u_n|^{q}-|u|^{q}\big|R(\alpha,u_n)\mathrm{d}x+\int_{\mathbb{R}^N}|u|^{q}|R(\alpha,u_n)-R(\alpha,u)|\mathrm{d}x\\
\leq&  \Big(\int_{\mathbb{R}^N}\big||u_n|^{q}-|u|^{q}\big|^{\nu'}\mathrm{d}x\Big)^{\frac{1}{\nu'}}\Big(\int_{\mathbb{R}^N}R(\alpha,u_n)^\nu\mathrm{d}x\Big)^{\frac{1}{\nu}}+\int_{\mathbb{R}^N}|u|^{q}|R(\alpha,u_n)-R(\alpha,u)|\mathrm{d}x\rightarrow 0,
\end{align*}
as $n\rightarrow\infty$.
\end{proof}

\begin{lemma}\label{strong2}
Assume that $(f_1)$-$(f_3)$ hold. Let $\{u_n\}\subset S_r(a)$ be a sequence 
 and satisfy $\sup\limits_{n\in\mathbb{N}^+}[u_n]_{s,N/s}^{N/s}<\big[\frac{(N-\mu)\alpha_*}{N \alpha_0}\big]^{(N-s)/{s}}$. If $u_n\rightharpoonup u$ in $W_{rad}^{s,N/s}(\mathbb{R}^N)$,
  then
\begin{equation*}
  \int_{\mathbb{R}^N}(I_\mu*F(u_n))F(u_n)\mathrm{d}x\rightarrow \int_{\mathbb{R}^N}(I_\mu*F(u))F(u)\mathrm{d}x,\,\,\,\text{as}\,\,\, n\rightarrow\infty,
\end{equation*}
\begin{equation*}
  \int_{\mathbb{R}^N}(I_\mu*F(u_n))f(u_n)u_n\mathrm{d}x\rightarrow \int_{\mathbb{R}^N}(I_\mu*F(u))f(u)u\mathrm{d}x,\,\,\,\text{as}\,\,\, n\rightarrow\infty,
\end{equation*}
and
\begin{equation*}
  \int_{\mathbb{R}^N}(I_\mu*F(u_n))f(u_n)\phi \mathrm{d}x\rightarrow \int_{\mathbb{R}^N}(I_\mu*F(u))f(u)\phi \mathrm{d}x,\,\,\,\text{as}\,\,\, n\rightarrow\infty
\end{equation*}
for any $\phi\in C_0^\infty(\mathbb{R}^N)$.
\end{lemma}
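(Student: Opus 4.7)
The plan is to apply the Hardy--Littlewood--Sobolev inequality (Proposition \ref{hardy}) to reduce each of the three convergences to strong $L^r$-convergence of the relevant nonlinear quantity, and then use the growth bound \eqref{Ft}, Lemma \ref{strong1}, the compact radial embedding $W_{rad}^{s,N/s}(\mathbb{R}^N)\hookrightarrow L^p(\mathbb{R}^N)$ for $p\in(N/s,\infty)$, and Vitali's convergence theorem. The strict inequality $\sup_n[u_n]_{s,N/s}^{N/s}<\bigl[(N-\mu)\alpha_*/(N\alpha_0)\bigr]^{(N-s)/s}$, which is stronger than the hypothesis of Lemma \ref{strong1}, is what furnishes the extra slack needed to handle the convolution structure.

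For the first convergence I would write
\begin{align*}
\int_{\mathbb{R}^N}(I_\mu*F(u_n))F(u_n)-\int_{\mathbb{R}^N}(I_\mu*F(u))F(u)
=\int_{\mathbb{R}^N}(I_\mu*F(u_n))(F(u_n)-F(u))+\int_{\mathbb{R}^N}(I_\mu*(F(u_n)-F(u)))F(u),
\end{align*}
apply HLS with $r=t=\frac{2N}{2N-\mu}$, and thus reduce to proving (a) $\|F(u_n)\|_r$ is uniformly bounded, and (b) $F(u_n)\to F(u)$ in $L^r(\mathbb{R}^N)$. Using the pointwise control $|F(t)|^r\leq C(|t|^{r(\kappa+1)}+|t|^{rq}R(\alpha,t)^r)$, the stricter bound on $[u_n]_{s,N/s}^{N/s}$ allows me to select $\alpha>\alpha_0$ close to $\alpha_0$ and a Hölder exponent $\tilde\nu$ \emph{strictly larger} than $r$ with $\tilde\nu\alpha[u_n]_{s,N/s}^{N/(N-s)}<\alpha_*$ uniformly in $n$; Lemma \ref{tm}(ii) then gives $R(\alpha,u_n)^r$ bounded in $L^{1+\delta}(\mathbb{R}^N)$. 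Combined with the compact embedding for the polynomial part, $|F(u_n)|^r$ is uniformly bounded in $L^{1+\delta'}(\mathbb{R}^N)$, hence uniformly integrable. Since $u_n\to u$ a.e. yields $F(u_n)\to F(u)$ a.e. by continuity of $F$, Vitali's theorem delivers (b); (a) follows from the same uniform-$L^{1+\delta'}$ bound.

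For the second convergence the identical decomposition works, splitting
\[
\int(I_\mu*F(u_n))f(u_n)u_n-\int(I_\mu*F(u))f(u)u
\]
into two pieces, one with the factor $F(u_n)-F(u)$ and one with $f(u_n)u_n-f(u)u$. The growth assumption $(f_3)$ together with $(f_1)$--$(f_2)$ yields $|f(t)t|\leq\zeta|t|^{\kappa+1}+C|t|^qR(\alpha,t)$, which has the same structure as \eqref{Ft}, so the previous scheme (HLS with $r=t=\frac{2N}{2N-\mu}$, uniform integrability via the slack in $\tilde\nu$, Vitali) carries over verbatim for both factors. For the third convergence with $\phi\in C_0^\infty(\mathbb{R}^N)$, the product $f(u_n)\phi$ is supported in the fixed compact set $\operatorname{supp}\phi$, so the local version of the compact embedding on $\operatorname{supp}\phi$ combined with the same $L^\nu$-bound on $R(\alpha,u_n)$ gives $f(u_n)\phi\to f(u)\phi$ in $L^t(\mathbb{R}^N)$, after which HLS concludes as before.

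The main difficulty lies in the exponent bookkeeping: I need the sharper constant $\frac{N-\mu}{N}$ (rather than $\frac{2N-\mu}{2N}$ as in Lemma \ref{strong1}) precisely because HLS forces one to pass to $L^r$ with $r=\frac{2N}{2N-\mu}>1$, and controlling $R(\alpha,u_n)^r$ uniformly in $L^{1+\delta}$ requires an $L^{\tilde\nu}$-bound on $R(\alpha,u_n)$ for $\tilde\nu$ strictly exceeding $\frac{2N}{2N-\mu}$. Verifying that the assumed bound on $[u_n]_{s,N/s}^{N/s}$ leaves room for such a $\tilde\nu$ (after choosing $\alpha$ close to $\alpha_0$) is the only delicate point; once it is done, Vitali's theorem and the already-established Lemma \ref{strong1} close the argument routinely.
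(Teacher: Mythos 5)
Your argument is correct, but it takes a genuinely different route from the paper. The paper never invokes the Hardy--Littlewood--Sobolev inequality here: it proves instead that $I_\mu*F(u_n)$ is bounded in $L^\infty(\mathbb{R}^N)$ \emph{uniformly in $n$}, by splitting the convolution into the regions $|x-y|\le 1$ and $|x-y|\ge 1$ and estimating each piece with H\"older, the Trudinger--Moser bound of Lemma \ref{tm} and the embedding $W^{s,N/s}\hookrightarrow L^p$; the sharper hypothesis $\sup_n[u_n]_{s,N/s}^{N/(N-s)}<\frac{(N-\mu)\alpha_*}{N\alpha_0}$ is consumed precisely in choosing $t>\frac{N}{N-\mu}$ with $\alpha t\nu\sup_n[u_n]_{s,N/s}^{N/(N-s)}<\alpha_*$ so that $|x-y|^{-\mu t'}$ is locally integrable. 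Once the convolution factor is uniformly bounded, the product is dominated by $C|u_n|^{\kappa+1}+C|u_n|^qR(\alpha,u_n)$ and the generalized dominated convergence theorem, fed by Lemma \ref{strong1} and the compact radial embedding, finishes the proof. Your scheme instead keeps the convolution abstract, reduces everything via HLS with $r=t=\frac{2N}{2N-\mu}$ to strong convergence of $F(u_n)$ and $f(u_n)u_n$ in $L^{2N/(2N-\mu)}$, and uses the slack $\frac{2N}{2N-\mu}<\tilde\nu<\frac{N}{N-\mu}$ to control $R(\alpha,u_n)^r$; your exponent check is right, since $\frac{N}{N-\mu}>\frac{2N}{2N-\mu}$ leaves exactly the room you need. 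Your route is the more standard one in the Choquard literature and adapts more readily when $I_\mu*F(u_n)$ fails to be bounded; the paper's route gives the stronger intermediate fact $\|I_\mu*F(u_n)\|_\infty\le C$, which makes the three convergences fall out of a single domination argument. One point to tighten: on the unbounded domain $\mathbb{R}^N$, uniform boundedness of $|F(u_n)|^r$ in $L^{1+\delta'}$ gives equi-integrability but not tightness, so Vitali alone does not close the argument as stated; you must also rule out mass escaping to infinity, which here follows from the strong convergence $u_n\to u$ in $L^{p}(\mathbb{R}^N)$ supplied by the compact radial embedding (or, more simply, replace Vitali by the generalized dominated convergence theorem applied to the dominating sequence $|u_n|^{r(\kappa+1)}+|u_n|^{rq}R(r\alpha,u_n)$, whose integrals converge by Lemma \ref{strong1}). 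With that repair the proposal is complete.
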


\begin{proof}
First, we claim that
$I_\mu\ast F(u_n)$ belongs to $L^\infty(\mathbb{R}^N)$. Indeed, by \eqref{Ft}, we have
\begin{align*}
  &\big|I_\mu\ast F(u_n)\big|\\ \leq& \int_{\mathbb{R}^N}\frac{\zeta|u_n|^{\kappa+1}+C|u_n|^{q}R(\alpha,u_n)}{|x-y|^\mu}dy\\
  \leq&
  C\int_{|x-y|\leq1}\frac{|u_n|^{\kappa+1}+|u_n|^{q}R(\alpha,u_n)}{|x-y|^\mu}dy+
  C\int_{|x-y|\geq1}\Big(\frac{|u_n|^{\kappa+1}}{|x-y|^\mu}+|u_n|^{q+1}R(\alpha,u_n)\Big)dy\\
  =&C_1+C\Big(\int_{|x-y|\leq1}\frac{|u_n|^{\kappa+1}}{|x-y|^\mu}dy+\int_{|x-y|\leq1}\frac{|u_n|^{q+1}R(\alpha,u_n)}{|x-y|^\mu}dy+\int_{|x-y|\geq1}|u_n|^{q+1}
 R(\alpha,u_n)dy\Big)\\
  =&:C_1+C(I+II+III).
\end{align*}
Choosing $\alpha>\alpha_0$ close to $\alpha_0$, $t>\frac{N}{N-\mu}$ close to $\frac{N}{N-\mu}$ and $\nu>1$ close to $1$ such that
$$\alpha t\nu\sup\limits_{n\in \mathbb{N}^+}[u_n]_{s,N/s}^{N/(N-s)}< \alpha_*.$$ Then for $t'=\frac{t}{t-1}$ and $\nu'=\frac{\nu}{\nu-1}$,
by $\kappa>\frac{3N-2s-\mu}{2s}>\frac{N-s}{s}$, using Lemma \ref{tm}, the H\"{o}lder inequality and Sobolev inequality, we have
\begin{equation*}
  I\leq \Big(\int_{|x-y|\leq1}|u_n|^{(\kappa+1)t}dy\Big)^{\frac{1}{t}}\Big(\int_{|x-y|\leq1}\frac{1}{|x-y|^{\mu t'}}dy\Big)^{\frac{1}{t'}}\leq C_2,
\end{equation*}
\begin{align*}
  II&\leq \Big(\int_{|x-y|\leq1}|u_n|^{qt}R\Big(\alpha t[u_n]_{s,N/s}^{N/(N-s)},\frac{u_n}{[u_n]_{s,N/s}}\Big)dy\Big)^{\frac{1}{t}}\Big(\int_{|x-y|\leq1}\frac{1}{|x-y|^{\mu t'}}dy\Big)^{\frac{1}{t'}}\\
  &\leq C\Big(\int_{|x-y|\leq1}|u_n|^{qt\nu'}dy\Big)^{\frac{1}{t\nu'}}
  \Big(\int_{|x-y|\leq1}R\Big(\alpha t\nu[u_n]_{s,N/s}^{N/(N-s)},\frac{u_n}{[u_n]_{s,N/s}}\Big)dy\Big)^{\frac{1}{t\nu}}
  \leq C_3,
\end{align*}
and
\begin{equation*}
  III\leq \Big(\int_{|x-y|\geq1}|u_n|^{(q+1)\nu'}\Big)^{\frac{1}{\nu'}} \Big(\int_{|x-y|\geq1}R\Big(\alpha \nu[u_n]_{s,N/s}^{N/(N-s)},\frac{u_n}{[u_n]_{s,N/s}}\Big)dy\Big)^{\frac{1}{\nu}}\leq C_4.
\end{equation*}
This proves the claim.

Hence
\begin{equation*}
  (I_\mu*F(u_n))F(u_n)\rightarrow (I_\mu*F(u))F(u)\quad \text{a.e. in $\mathbb{R}^N$},
\end{equation*}
\begin{equation*}
 |(I_\mu*F(u_n))F(u_n)|\leq C|F(u_n)|\leq C|u_n|^{\kappa+1}+C|u_n|^{q}R(\alpha,u_n)\quad\text{for all $n\in \mathbb{N}^+$},
\end{equation*}
and
\begin{equation*}
  |u_n|^{\kappa+1}+|u_n|^{q}R(\alpha,u_n)\rightarrow |u|^{\kappa+1}+|u|^{q}R(\alpha,u)\quad\text{a.e. in $\mathbb{R}^N$}.
\end{equation*}
By Lemma \ref{strong1}
,  the compact embedding $W^{s,N/s}_{rad}(\mathbb{R}^N)\hookrightarrow L^{\kappa+1}(\mathbb{R}^N)$,
and the Lebesgue dominated convergence theorem, we get
\begin{equation*}
  \int_{\mathbb{R}^N}(I_\mu*F(u_n))F(u_n)\mathrm{d}x\rightarrow \int_{\mathbb{R}^N}(I_\mu*F(u))F(u)\mathrm{d}x,\,\,\,\text{as}\,\,\, n\rightarrow\infty.
\end{equation*}
Similarly, we can prove that others hold. 
\end{proof}

\section{{\bfseries The minimax approach}}\label{minimax}
Denote $\widetilde{\mathcal{J}}(u,\beta)=\mathcal{J}(\mathcal{H}(u,\beta))$, we will prove that $\widetilde{\mathcal{J}}$ possesses a kind of mountain-pass geometrical structure.


\begin{lemma}\label{mountain}
Assume that $(f_1)$-$(f_3)$ hold. Let $u\in S_r(a)$ be arbitrary but fixed, then we have\\
(i)  $\widetilde{\mathcal{J}}(u,\beta)\to0^+$ as $\beta\to -\infty$; \\
(ii)  $\widetilde{\mathcal{J}}(u,\beta)\to -\infty$ as $\beta\to +\infty$.
\end{lemma}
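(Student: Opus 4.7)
The plan is to make the scaling explicit and then reduce both parts to a power comparison in $e^\beta$. Direct change of variables $y = e^\beta x$ in the Gagliardo seminorm and in the HLS-type integral yields
\begin{equation*}
\widetilde{\mathcal{J}}(u,\beta) = \frac{s}{N}\, e^{N\beta}\, [u]_{s,N/s}^{N/s} \;-\; \frac{1}{2}\, e^{(\mu-2N)\beta} \int_{\mathbb{R}^N}\int_{\mathbb{R}^N} \frac{F(e^{s\beta}u(x))\, F(e^{s\beta}u(y))}{|x-y|^\mu}\, \mathrm{d}x\,\mathrm{d}y,
\end{equation*}
so the only real task is to compare the exponents in $e^\beta$ carried by the two terms.

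For (i), I will apply \eqref{Ft} with $q$ chosen $\geq \kappa+1$, together with the termwise monotonicity $R(\alpha, e^{s\beta}u) \leq R(\alpha, u)$ for $\beta \leq 0$ (immediate from the power-series definition of $R$), to obtain
\begin{equation*}
|F(e^{s\beta}u)| \leq e^{s(\kappa+1)\beta}\bigl(\zeta|u|^{\kappa+1} + C|u|^q R(\alpha,u)\bigr) \quad \text{for } \beta \leq 0.
\end{equation*}
Proposition \ref{hardy} with $r = t = 2N/(2N-\mu)$, combined with H\"older and Lemma \ref{tm} (integrability of $R(\alpha,u)^\nu$ for some $\nu>1$), bounds the double integral by a constant multiple of $e^{2s(\kappa+1)\beta}$. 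Hence the potential term is $O(e^{(\mu-2N+2s(\kappa+1))\beta})$, and the algebra $\mu - 2N + 2s(\kappa+1) > N \iff \kappa > (3N-2s-\mu)/(2s)$ shows that $(f_1)$ is precisely the condition under which the positive kinetic term $(s/N)e^{N\beta}[u]_{s,N/s}^{N/s}$ strictly dominates, giving $\widetilde{\mathcal{J}}(u,\beta) \to 0^+$.

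For (ii), $(f_3)$ yields via the standard ODE argument (integrating $f(t)/F(t)\geq \theta/t$) a lower bound $F(t) \geq C|t|^\theta$ for $|t|\geq 1$. Since $u \not\equiv 0$, the set $A := \{x \in \mathbb{R}^N : |u(x)| \geq \delta\}$ has positive Lebesgue measure for some $\delta > 0$; for $\beta$ large enough $|e^{s\beta}u|\geq 1$ on $A$, so $F(e^{s\beta}u) \geq C e^{s\theta\beta}|u|^\theta$ there. Restricting the double integral to $A\times A$ gives
\begin{equation*}
\int_{\mathbb{R}^N}\int_{\mathbb{R}^N} \frac{F(e^{s\beta}u(x))\, F(e^{s\beta}u(y))}{|x-y|^\mu}\,\mathrm{d}x\,\mathrm{d}y \;\geq\; C e^{2s\theta\beta}\int_{A}\int_{A}\frac{|u(x)|^\theta|u(y)|^\theta}{|x-y|^\mu}\,\mathrm{d}x\,\mathrm{d}y,
\end{equation*}
a positive multiple of $e^{2s\theta\beta}$. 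The potential term is therefore $\leq -C e^{(\mu-2N+2s\theta)\beta}$, and the algebra $\mu-2N+2s\theta > N \iff \theta > (3N-\mu)/(2s)$ shows that $(f_3)$ is exactly what forces the potential to overwhelm the kinetic term, so $\widetilde{\mathcal{J}}(u,\beta) \to -\infty$. Nothing here is conceptually deep; the main thing to watch is that both growth assumptions have been tailored so that the dangerous exponent in $e^\beta$ exceeds $N$ by a strict amount, and the only mildly technical point (the handling of $R(\alpha,e^{s\beta}u)$ in (i)) is resolved by series-level monotonicity.
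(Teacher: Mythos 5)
Your proof is correct, and its skeleton is the same as the paper's: make the scaling in $\beta$ explicit, then compare the exponents of $e^{\beta}$ carried by the kinetic and Choquard terms, with $(f_1)$ (resp.\ $(f_3)$) guaranteeing that the Choquard exponent strictly exceeds $N$ at $-\infty$ (resp.\ $+\infty$). The two key estimates are, however, implemented differently. For (i), the paper keeps the two terms of \eqref{Ft} separate and controls $R(\alpha,\mathcal{H}(u,\beta))$ via the normalized Trudinger--Moser bound of Lemma \ref{tm}(ii), which requires first arranging $[\mathcal{H}(u,\beta)]_{s,N/s}$ to be small for $\beta\ll 0$ so that $\frac{2N\alpha\nu}{2N-\mu}[\mathcal{H}(u,\beta)]_{s,N/s}^{N/(N-s)}<\alpha_*$; you instead exploit the elementary termwise monotonicity $R(\alpha,e^{s\beta}u)\le R(\alpha,u)$ for $\beta\le 0$ and then only need Lemma \ref{tm}(i) applied to the fixed function $u$, which yields a genuinely $\beta$-independent constant with less machinery, at the harmless cost of fixing $q\ge\kappa+1$. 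For (ii), the paper integrates the differential inequality $\frac{\mathrm{d}}{\mathrm{d}t}\log\mathcal{W}(t)>2\theta/t$ coming from $(f_3)$ to obtain $\mathcal{W}(e^{s\beta})\ge e^{2s\theta\beta}\mathcal{W}(1)$ globally; you instead use the pointwise consequence $F(t)\ge C|t|^{\theta}$ for $|t|\ge 1$ and restrict the double integral to a superlevel set of $|u|$, which needs the positivity of $F$ (true by $(f_3)$) to discard the complement but avoids differentiating under the integral sign. Both routes produce the same exponents, $2s(\kappa+1)+\mu-2N>N$ and $2s\theta+\mu-2N>N$, and hence the same conclusion; the paper's version of (ii) is slightly cleaner, while your version of (i) is slightly more robust.
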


\begin{proof}
$(i)$ By a straightforward calculation, we have
\begin{align*}
\int_{\mathbb{R}^N}|\mathcal{H}(u,\beta) |^{N/s} \mathrm{d}x=a^{N/s},\ \ \int_{\mathbb{R}^N}|\mathcal{H}(u,\beta) |^\xi \mathrm{d}x=e^{(s \xi-N)\beta}\int_{\mathbb{R}^N}|u|^\xi\mathrm{d}x, \ \ \text{for any $\xi>N/s$},
\end{align*}
and
\begin{align*}
\int_{\mathbb{R}^N}\int_{\mathbb{R}^N}\frac{|\mathcal{H}(u,\beta)(x)-\mathcal{H}(u,\beta)(y)|^{N/s}}{|x-y|^{2N}}
\mathrm{d}x\mathrm{d}y
=e^{N\beta}\int_{\mathbb{R}^N}\int_{\mathbb{R}^N}\frac{| u(x)-u(y)|^{N/s}}{|x-y|^{2N}}
\mathrm{d}x\mathrm{d}y.
\end{align*}
Thus there exist $\beta_1<<0$  such that
\begin{equation*}
\int_{\mathbb{R}^N}\int_{\mathbb{R}^N}\frac{|\mathcal{H}(u,\beta)(x)-\mathcal{H}(u,\beta)(y)|^{N/s}}{|x-y|^{2N}}
\mathrm{d}x\mathrm{d}y<\Big[\frac{(2N-\mu)\alpha_*}{2N \alpha_0}\Big]^{\frac{N-s}{s}},\quad \text{for any $\beta<\beta_1$.}
\end{equation*}
  Fix $\alpha>\alpha_0$ close to $\alpha_0$ and $\nu>1$ close to $1$ such that
\begin{equation*}
  \frac{2N\alpha \nu}{2N-\mu}[\mathcal{H}(u,\beta)]_{s,N/s}^{N/(N-s)}<\alpha_*,\quad \text{for any $\beta<\beta_1$}.
\end{equation*}
 Then, for $\frac{1}{\nu}+\frac{1}{\nu'}=1$, using \eqref{HLS}, $(\ref{Ft})$, Lemma \ref{tm}, the H\"{o}lder and Sobolev inequality, we have
\begin{align}\label{tain1}
&\int_{\mathbb{R}^N}\big(I_\mu*F(\mathcal{H}(u,\beta) )\big)F(\mathcal{H}(u,\beta) )\mathrm{d}x\leq \|F(\mathcal{H}(u,\beta))\|_{\frac{2N}{2N-\mu}}^2\nonumber\\
\leq&
\zeta \|\mathcal{H}(u,\beta) \|_{\frac{2N(\kappa+1)}{2N-\mu}}^{2(\kappa+1)}+C \bigg[\int_{\mathbb{R}^N} \big[R(\alpha,\mathcal{H}(u,\beta)) |\mathcal{H}(u,\beta)|^{q}\big]^{\frac{2N}{2N-\mu}} \mathrm{d}x\bigg]^{\frac{2N-\mu}{N}}\nonumber\\
\leq &\zeta \|\mathcal{H}(u,\beta) \|_{\frac{2N(\kappa+1)}{2N-\mu}}^{2(\kappa+1)}+C \bigg[\int_{\mathbb{R}^N} \Big[R\Big(\alpha [\mathcal{H}(u,\beta)]_{s,N/s}^{N/(N-s)},\frac{\mathcal{H}(u,\beta)}{[\mathcal{H}(u,\beta)]_{s,N/s}}\Big) |\mathcal{H}(u,\beta)|^{q}\Big]^{\frac{2N}{2N-\mu}} \mathrm{d}x\bigg]^{\frac{2N-\mu}{N}}\nonumber\\
\leq&\zeta \|\mathcal{H}(u,\beta) \|_{\frac{2N(\kappa+1)}{2N-\mu}}^{2(\kappa+1)}+C
\bigg[\int_{\mathbb{R}^N}\Big[R\Big(\alpha [\mathcal{H}(u,\beta)]_{s,N/s}^{N/(N-s)},\frac{\mathcal{H}(u,\beta)}{[\mathcal{H}(u,\beta)]_{s,N/s}}\Big)\Big]^{\frac{2N\nu}{2N-\mu}}\mathrm{d}x\bigg]^{\frac{2N-\mu}{N\nu}}\| \mathcal{H}(u,\beta) \|_{\frac{2Nq\nu'}{2N-\mu}}^{2q}\nonumber\\
\leq&\zeta \|\mathcal{H}(u,\beta) \|_{\frac{2N(\kappa+1)}{2N-\mu}}^{2(\kappa+1)}+C
\bigg[\int_{\mathbb{R}^N}R\Big(\frac{2N\alpha \nu}{2N-\mu} [\mathcal{H}(u,\beta)]_{s,N/s}^{N/(N-s)},\frac{\mathcal{H}(u,\beta)}{[\mathcal{H}(u,\beta)]_{s,N/s}}\Big)\mathrm{d}x\bigg]^{\frac{2N-\mu}{N\nu}}\| \mathcal{H}(u,\beta) \|_{\frac{2Nq\nu'}{2N-\mu}}^{2q}\nonumber\\
\leq& \zeta \|\mathcal{H}(u,\beta) \|_{\frac{2N(\kappa+1)}{2N-\mu}}^{2(\kappa+1)}+C\| \mathcal{H}(u,\beta) \|_{{\frac{2Nq\nu'}{2N-\mu}}}^{2q}\nonumber\\
=&\zeta e^{(2s\kappa+2s+\mu -2N)\beta}
\| u \|_{\frac{2N(\kappa+1)}{2N-\mu}}^{2(\kappa+1)}+Ce^{\frac{(2qs\nu'+\mu -2N)\beta}{\nu'}}
\| u\|_{{\frac{2Nq\nu'}{2N-\mu}}}^{2q}.
\end{align}
Since $\kappa>\frac{3N-2s-\mu}{2s}$, $q>\frac{3N}{2s}$ and $\nu'=\frac{\nu}{\nu-1}$ large enough, we have
\begin{align*}
 \widetilde{\mathcal{J}}(u,\beta)\geq &\frac{s}{N}e^{N\beta}\int_{\mathbb{R}^N}\int_{\mathbb{R}^N}\frac{| u(x)-u(y)|^{N/s}}{|x-y|^{2N}}
\mathrm{d}x\mathrm{d}y\\&- C e^{(2s\kappa+2s+\mu -2N)\beta}
\| u \|_{\frac{2N(\kappa+1)}{2N-\mu}}^{2(\kappa+1)}-Ce^{\frac{(2qs\nu'+\mu -2N)\beta}{\nu'}}
\| u\|_{{\frac{2Nq\nu'}{2N-\mu}}}^{2q} \rightarrow0^+,\ \ \mbox{as} \ \beta\to-\infty,
\end{align*}
and by $(f_3)$
\begin{align*}
 \widetilde{\mathcal{J}}(u,\beta)\leq &\frac{s}{N}e^{N\beta}\int_{\mathbb{R}^N}\int_{\mathbb{R}^N}\frac{| u(x)-u(y)|^{N/s}}{|x-y|^{2N}}
\mathrm{d}x\mathrm{d}y\rightarrow 0^+,\ \ \mbox{as} \ \beta\to-\infty.
\end{align*}
So we have $\widetilde{\mathcal{J}}(u,\beta)\to0^+$ as $\beta\to -\infty$.

$(ii)$
For any fixed $\beta>>0$,  set
\begin{align*}
\mathcal{W}(t):=\int_{\mathbb{R}^N}(I_\mu*F(tu))F(tu)\mathrm{d}x,\quad \text{for any $t>0$}.
\end{align*}
Using $(f_3)$, one has
\begin{align*}
\frac{\frac{\mathrm{d}\mathcal{W}(t)}{\mathrm{d}t}}{\mathcal{W}(t)}>\frac{2\theta}{t},\quad \text{for any $t>0$}.
\end{align*}
Thus, integrating this over $\big[1,e^{s\beta}\big]$, we get
\begin{align}\label{fff}
\int_{\mathbb{R}^N}(I_\mu*F(e^{s\beta}u))F(e^{s\beta}u)\mathrm{d}x\geq e^{2s\theta\beta}\int_{\mathbb{R}^N}(I_\mu*F(u))F(u)\mathrm{d}x.
\end{align}
Hence,
\begin{align*}
\widetilde{\mathcal{J}}(u,\beta)\leq & \frac{s}{N}e^{N\beta}\int_{\mathbb{R}^N}\int_{\mathbb{R}^N}\frac{| u(x)-u(y)|^{N/s}}{|x-y|^{2N}}
\mathrm{d}x\mathrm{d}y- \frac{1}{2} e^{(2s\theta+\mu-2N)\beta}\int_{\mathbb{R}^N}(I_\mu*F(u))F(u)\mathrm{d}x.
\end{align*}
Since $\theta>\frac{3N-\mu}{2s}$, the above inequality yields that $\widetilde{\mathcal{J}}(u,\beta)\to -\infty$ as $\beta\to+\infty$.
\end{proof}

\begin{lemma}\label{minimax2}
Assume that $(f_1)$-$(f_3)$ hold. Then there exist $0<k_1<k_2$ such that
\begin{equation*}
  0<\inf\limits_{u\in \mathcal{A}} \mathcal J(u)\leq \sup\limits_{u\in \mathcal{A}} \mathcal J(u)<\inf\limits_{u\in \mathcal{B}} \mathcal J(u)
\end{equation*}
with
\begin{equation*}
  \mathcal{A}=\Big\{u\in S_r(a):[u]_{s,N/s}\leq k_1\Big\},\quad\mathcal{B}=\Big\{u\in S_r(a):[u]_{s,N/s}=k_2\Big\}.
\end{equation*}
\end{lemma}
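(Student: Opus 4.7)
My plan is to estimate the Choquard term $\int_{\mathbb{R}^N}(I_\mu* F(u))F(u)\,\mathrm{d}x$ from above by a sum of powers of $[u]_{s,N/s}$ with exponents strictly greater than $N/s$, so that the kinetic part $\frac{s}{N}[u]_{s,N/s}^{N/s}$ dominates $\mathcal{J}(u)$ whenever $[u]_{s,N/s}$ is small. Combined with the trivial upper bound $\mathcal{J}(u)\leq \frac{s}{N}[u]_{s,N/s}^{N/s}$, which follows from $F\geq 0$ (a consequence of $(f_3)$), this sandwiches $\mathcal{J}(u)$ between two positive multiples of $[u]_{s,N/s}^{N/s}$ on a suitably small ball, and the lemma reduces to choosing $0<k_1<k_2$ so that the upper bound on $\mathcal{A}$ undercuts the lower bound on $\mathcal{B}$.

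First I fix $\alpha>\alpha_0$ close to $\alpha_0$ and $\nu>1$ close to $1$, and then a small $k_2>0$ with $\frac{2N\alpha\nu}{2N-\mu}k_2^{N/(N-s)}<\alpha_*$. For any $u\in S_r(a)$ with $[u]_{s,N/s}\leq k_2$, I repeat the chain of estimates that produced \eqref{tain1}---Hardy--Littlewood--Sobolev (Proposition \ref{hardy}), the pointwise bound \eqref{Ft}, H\"{o}lder, and the fractional Trudinger-Moser inequality (Lemma \ref{tm}(ii))---without any scaling factor (i.e.\ the $\beta=0$ version) to obtain
\[
\int_{\mathbb{R}^N}(I_\mu* F(u))F(u)\,\mathrm{d}x \leq \zeta\,\|u\|_{\frac{2N(\kappa+1)}{2N-\mu}}^{2(\kappa+1)} + C\,\|u\|_{\frac{2Nq\nu'}{2N-\mu}}^{2q},
\]
with $\nu'=\nu/(\nu-1)$.

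Next I apply the Gagliardo-Nirenberg inequality (Lemma \ref{GN}) together with $\|u\|_{N/s}=a$ to each $L^r$-norm on the right. A direct calculation shows that the exponent of $[u]_{s,N/s}$ that appears is $\gamma_1=2(\kappa+1)-\frac{2N-\mu}{s}$ for the first summand and $\gamma_2=2q-\frac{2N-\mu}{s\nu'}$ for the second. The assumption $\kappa>\frac{3N-2s-\mu}{2s}$ from $(f_1)$ is exactly the threshold that yields $\gamma_1>N/s$, while $q>\frac{3N}{2s}$ together with any $\nu'>1$ automatically gives $\gamma_2>N/s$; both are compatible with the Trudinger-Moser restriction fixed above. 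Therefore, for all $u\in S_r(a)$ with $[u]_{s,N/s}\leq k_2$,
\[
\mathcal{J}(u)\geq \frac{s}{N}[u]_{s,N/s}^{N/s}-C(a)\bigl([u]_{s,N/s}^{\gamma_1}+[u]_{s,N/s}^{\gamma_2}\bigr),
\]
and, shrinking $k_2$ once more if necessary (which preserves the Trudinger-Moser subcriticality), the right-hand side is at least $\frac{s}{2N}[u]_{s,N/s}^{N/s}$ on this range.

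Finally I pick any $k_1\in(0,k_2)$ with $k_2^{N/s}>2 k_1^{N/s}$, e.g.\ $k_1=k_2/3$. Then for $u\in\mathcal{A}$ one has $\mathcal{J}(u)\leq \frac{s}{N}k_1^{N/s}$, whereas for $u\in\mathcal{B}$ one has $\mathcal{J}(u)\geq \frac{s}{2N}k_2^{N/s}>\frac{s}{N}k_1^{N/s}$, which yields $\sup_{u\in\mathcal{A}}\mathcal{J}(u)<\inf_{u\in\mathcal{B}}\mathcal{J}(u)$; the lower bound also gives $\mathcal{J}(u)\geq \frac{s}{2N}[u]_{s,N/s}^{N/s}>0$ for every $u\in\mathcal{A}$, since $u\in S_r(a)$ forces $[u]_{s,N/s}>0$. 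The main obstacle is purely bookkeeping: one must check that $(f_1)$ is precisely the threshold making $\gamma_1>N/s$, and that the Trudinger-Moser subcriticality ($\nu$ close to $1$) is compatible with the Gagliardo-Nirenberg condition $\gamma_2>N/s$---which it is, thanks to $q>\frac{3N}{2s}$. Beyond this, the argument is a direct specialization of the estimate \eqref{tain1} already performed in Lemma \ref{mountain}(i).
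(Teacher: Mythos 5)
Your argument is essentially the paper's own proof: both bound the Choquard term via Hardy--Littlewood--Sobolev, the pointwise bound \eqref{Ft}, H\"older and the fractional Trudinger--Moser inequality as in \eqref{tain1}, then apply the Gagliardo--Nirenberg inequality \eqref{gns} with $\|u\|_{N/s}=a$ to get powers $[u]_{s,N/s}^{\gamma_1}+[u]_{s,N/s}^{\gamma_2}$ with $\gamma_1,\gamma_2>N/s$, and your exponent bookkeeping ($\gamma_1=2\kappa+2+\frac{\mu-2N}{s}$, $\gamma_2=\frac{2qs\nu'+\mu-2N}{s\nu'}$, and the role of $(f_1)$ and $q>\frac{3N}{2s}$) matches the paper exactly. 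The only weak spot is the claim $0<\inf_{u\in\mathcal{A}}\mathcal{J}(u)$: the bound $\mathcal{J}(u)\geq\frac{s}{2N}[u]_{s,N/s}^{N/s}>0$ is pointwise, but since the mass-preserving scaling $\mathcal{H}(u,\beta)$ sends $[u]_{s,N/s}\to0$ as $\beta\to-\infty$ while staying in $S_r(a)$, the seminorm is not bounded below on $\mathcal{A}$ and this does not yield a positive infimum; the paper's proof has the identical gap (it merely asserts ``similarly''), and only the pointwise positivity $\mathcal{J}(\hat u)>0$ is actually used later (Lemma \ref{minimax3}), so your treatment is on par with the paper's.
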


\begin{proof}
If $k_2<\big[\frac{(2N-\mu)\alpha_*}{2N \alpha_0}\big]^{(N-s)/{N}}$, then for any $u\in \mathcal{B}$,
\begin{equation*}
[u]_{N/s}^{N/s}<\Big[\frac{(2N-\mu)\alpha_*}{2N \alpha_0}\Big]^{\frac{N-s}{s}}.
\end{equation*}
Fix $\alpha>\alpha_0$ close to $\alpha_0$ and $\nu>1$ close to $1$ such that
\begin{equation*}
  \frac{2N\alpha \nu}{2N-\mu}[u]_{s,N/s}^{N/({N-s})}<\alpha_*.
\end{equation*}
From \eqref{gns} and \eqref{tain1}, we obtain
\begin{align*}
\int_{\mathbb{R}^N}\big(I_\mu*F(u)\big)F(u)\mathrm{d}x \leq& C \|u \|_{\frac{2N(\kappa+1)}{2N-\mu}}^{2(\kappa+1)}+C\| u \|_{{\frac{2Nq\nu'}{2N-\mu}}}^{2q}
\leq C a^{\frac{2N-\mu}{s}} [u]_{s,N/s}^{2\kappa+2+\frac{\mu }{s}-\frac{2N}{s}} +C a^{\frac{2N-\mu}{s\nu'}} [u]_{s,N/s}^{\frac{2qs\nu'+\mu -2N}{s\nu'}}.
\end{align*}
Since $\kappa>\frac{3N-2s-\mu}{2s}$, $q>\frac{3N}{2s}$, and $\nu'=\frac{\nu}{\nu-1}$ large enough, 
for any $u\in \mathcal{B}$, there holds
\begin{equation*}
  \mathcal{J}(u)>\frac{s}{N}[u]_{s,N/s}^{N/s}-C a^{\frac{2N-\mu}{s}} [u]_{s,N/s}^{2\kappa+2+\frac{\mu }{s}-\frac{2N}{s}} +C a^{\frac{2N-\mu}{s\nu'}} [u]_{s,N/s}^{\frac{2qs\nu'+\mu -2N}{s\nu'}}>\widetilde{C}>0.
\end{equation*}
Similarly, we can prove that $\inf\limits_{u\in \mathcal{A}} \mathcal J(u)>\widehat{C}>0$.

On the other hand, by $(f_3)$, we have
\begin{equation*}
  \mathcal{J}(u)<\frac{s}{N}[u]_{s,N/s}^{N/s},
\end{equation*}
which implies that $\sup\limits_{u\in \mathcal{A}}\mathcal{J}(u)<\widetilde{C}$ for any $k_1\in(0,k_2)$ small enough. This ends the proof.
\end{proof}
Following by \cite{willem}, we recall that for any $a>0$, the tangent space of $S_r(a)$ at $u$ is defined by
\begin{align*}
T_u:=\Big\{v\in W_{rad}^{s,N/s}(\mathbb{R}^N) : \int_{\mathbb{R}^N}|u|^{\frac{N}{s}-2}uv\mathrm{d}x=0\Big\},
\end{align*}
and the tangent space of $S_r(a)\times \mathbb{R}$ at $(u,t)$ is defined by
\begin{align*}
\widetilde{T}_{u,t}:=\Big\{(v,k)\in W_{rad}^{s,N/s}(\mathbb{R}^N)\times \mathbb{R} : \int_{\mathbb{R}^N}|u|^{\frac{N}{s}-2}uv\mathrm{d}x=0\Big\}.
\end{align*}
\begin{lemma}\label{minimax3}
Let $k_1,k_2$ be defined in Lemma \ref{minimax2}. Then there exist $\hat{u},\tilde{u}\in S_r(a)$ such that

$(i)$ $[\hat{u}]_{s,N/s}\leq k_1$;

$(ii)$ $[\tilde{u}]_{s,N/s}> k_2$;

$(iii)$ $\mathcal J(\hat{u})>0>\mathcal J(\tilde{u})$.
\\Moreover, setting
\begin{equation*}
  \widetilde{m}_\tau(a)=\inf\limits_{\widetilde{h}\in \widetilde{\Gamma}_a}\max\limits _{t\in[0,1]}\widetilde{\mathcal J}({\widetilde{h}(t)})
\end{equation*}
with
\begin{equation*}
  \widetilde{\Gamma}_a=\big\{\widetilde{h}\in C([0,1],S_r(a)\times\mathbb{R} ):\widetilde{h}(0)=(\hat{u},0),\widetilde{h}(1)=(\tilde{u},0)\big\},
\end{equation*}
and
\begin{equation*}
  m_\tau(a)=\inf\limits_{h\in \Gamma_a}\max\limits _{t\in[0,1]}\mathcal J({h(t)})
\end{equation*}
with
\begin{equation*}
  \Gamma_a=\big\{h\in C([0,1],S_r(a)):h(0)=\hat{u},h(1)=\tilde{u}\big\},
\end{equation*}
then we have
\begin{equation*}
   \widetilde{m}_\tau(a)=m_\tau(a)\geq\max\{\mathcal J(\hat{u}),\mathcal J(\tilde{u})\}>0.
\end{equation*}
\end{lemma}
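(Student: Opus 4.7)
My plan is to exploit the scaling $\mathcal{H}(\cdot,\beta)$, which preserves $\|\cdot\|_{N/s}$ (hence $S_r(a)$), to convert the vertical geometry of $\widetilde{\mathcal{J}}$ from Lemma \ref{mountain} into a horizontal mountain--pass geometry of $\mathcal{J}$ on $S_r(a)$. Fix an arbitrary $u_0\in S_r(a)$. The identity
\[
[\mathcal{H}(u_0,\beta)]_{s,N/s}^{N/s}=e^{N\beta}[u_0]_{s,N/s}^{N/s}
\]
shows that $[\mathcal{H}(u_0,\beta)]_{s,N/s}\to 0$ as $\beta\to-\infty$ and $\to+\infty$ as $\beta\to+\infty$. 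Choose $\beta_-$ sufficiently negative so that both $[\mathcal{H}(u_0,\beta_-)]_{s,N/s}\le k_1$ and, by Lemma \ref{mountain}(i), $\widetilde{\mathcal{J}}(u_0,\beta_-)>0$; set $\hat u:=\mathcal{H}(u_0,\beta_-)$. Symmetrically, pick $\beta_+$ sufficiently positive so that $[\mathcal{H}(u_0,\beta_+)]_{s,N/s}>k_2$ and, by Lemma \ref{mountain}(ii), $\widetilde{\mathcal{J}}(u_0,\beta_+)<0$; set $\tilde u:=\mathcal{H}(u_0,\beta_+)$. Since $\mathcal{J}(\mathcal{H}(u_0,\beta))=\widetilde{\mathcal{J}}(u_0,\beta)$ by definition, items (i)--(iii) follow at once.

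Next I would show $\widetilde m_\tau(a)=m_\tau(a)$ via two mutually inverse constructions of paths. For $\widetilde m_\tau(a)\le m_\tau(a)$, given $h\in\Gamma_a$ set $\widetilde h(t):=(h(t),0)$; this lies in $\widetilde\Gamma_a$ and $\widetilde{\mathcal{J}}(\widetilde h(t))=\mathcal{J}(\mathcal{H}(h(t),0))=\mathcal{J}(h(t))$, so the two $\max$ values coincide. For the reverse inequality, write any $\widetilde h\in\widetilde\Gamma_a$ as $\widetilde h(t)=(\eta(t),\beta(t))$ with $\eta$, $\beta$ continuous and $\beta(0)=\beta(1)=0$, and let $h(t):=\mathcal{H}(\eta(t),\beta(t))$. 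The $L^{N/s}$--invariance of $\mathcal{H}$ places $h(t)$ in $S_r(a)$; the boundary values force $h(0)=\hat u$ and $h(1)=\tilde u$; and once continuity of $h$ in $W^{s,N/s}_{rad}(\mathbb{R}^N)$ is verified, $h\in\Gamma_a$. Since $\mathcal{J}(h(t))=\widetilde{\mathcal{J}}(\widetilde h(t))$, taking infima over the two path spaces yields $m_\tau(a)\le\widetilde m_\tau(a)$.

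For the lower bound and strict positivity, I would use the annular barrier supplied by Lemma \ref{minimax2}. Any $h\in\Gamma_a$ satisfies $[h(0)]_{s,N/s}\le k_1<k_2<[h(1)]_{s,N/s}$, so continuity of the seminorm along $h$ and the intermediate value theorem produce $t_\star\in(0,1)$ with $h(t_\star)\in\mathcal{B}$. Hence
\[
\max_{t\in[0,1]}\mathcal{J}(h(t))\ge\mathcal{J}(h(t_\star))\ge\inf_{\mathcal{B}}\mathcal{J}>\sup_{\mathcal{A}}\mathcal{J}\ge\mathcal{J}(\hat u)>0,
\]
while $\mathcal{J}(\tilde u)<0$; taking the infimum over $\Gamma_a$ delivers $m_\tau(a)\ge\max\{\mathcal{J}(\hat u),\mathcal{J}(\tilde u)\}>0$, and the previous paragraph transfers the bound to $\widetilde m_\tau(a)$.

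The step I expect to be the main technical obstacle is the continuity of $t\mapsto\mathcal{H}(\eta(t),\beta(t))$ in $W^{s,N/s}_{rad}(\mathbb{R}^N)$ needed above: for each fixed $\beta$ the map $u\mapsto\mathcal{H}(u,\beta)$ is a bounded linear isomorphism of $W^{s,N/s}_{rad}(\mathbb{R}^N)$ with operator bound depending only on $\beta$, and for each fixed $u\in C_0^\infty(\mathbb{R}^N)$ a direct change of variables together with the dominated convergence theorem on both the Gagliardo seminorm and the $L^{N/s}$--norm yields continuity of $\beta\mapsto\mathcal{H}(u,\beta)$; density of $C_0^\infty(\mathbb{R}^N)$ in $W^{s,N/s}(\mathbb{R}^N)$ plus a standard $\varepsilon/3$ argument then upgrades this to joint continuity of $(u,\beta)\mapsto\mathcal{H}(u,\beta)$.
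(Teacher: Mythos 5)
Your proposal is correct and follows essentially the same route as the paper: construct $\hat u=\mathcal{H}(u_0,\beta_-)$ and $\tilde u=\mathcal{H}(u_0,\beta_+)$ from Lemmas \ref{mountain} and \ref{minimax2}, and prove $\widetilde m_\tau(a)=m_\tau(a)$ by the same two path correspondences $h\mapsto(h,0)$ and $(\eta,\beta)\mapsto\mathcal{H}(\eta,\beta)$. The only (harmless) deviations are that you obtain the lower bound via the intermediate-value/annulus argument through $\mathcal{B}$, whereas the paper simply evaluates each path at its endpoints $h(0)=\hat u$, $h(1)=\tilde u$, and that you explicitly address the continuity of $(u,\beta)\mapsto\mathcal{H}(u,\beta)$, which the paper leaves implicit.
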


\begin{proof}
For any fixed $u_0\in S_r(a)$, by Lemmas \ref{mountain} and \ref{minimax2}, there exist two numbers $s_1<<-1$ and $s_2>>1$ such  that $\hat{u}=\mathcal{H}(u_0,s_1)$ and $\tilde{u}=\mathcal{H}(u_0,s_2)$ satisfy $(i)$-$(iii)$. For any $\widetilde{h}\in \widetilde{\Gamma}_a$, we write it into
\begin{equation*}
  \widetilde{h}(t)=(\widetilde{h}_1(t),\widetilde{h}_2(t))\in S(a)\times\mathbb{R}.
\end{equation*}
Setting $h(t)=\mathcal{H}(\widetilde{h}_1(t),\widetilde{h}_2(t))$, then $h(t)\in \Gamma_a$ and
\begin{equation*}
  \max\limits _{t\in[0,1]}\widetilde{\mathcal J}({\widetilde{h}(t)})=\max\limits _{t\in[0,1]}\mathcal J({h(t)})\geq  m_\tau(a).
\end{equation*}
By the arbitrariness of $\widetilde{h}\in \widetilde{\Gamma}_a$, we get $\widetilde{m}_\tau(a)\geq m_\tau(a)$.

On the other hand, for any $h\in \Gamma_a$, if we set $\widetilde{h}(t)=(h(t),0)$, then $\widetilde{h}(t)\in \widetilde{\Gamma}_a$ and
\begin{equation*}
  \widetilde{m}_\tau(a)\leq \max\limits _{t\in[0,1]}\widetilde{\mathcal J}({\widetilde{h}(t)})=\max\limits _{t\in[0,1]}\mathcal J({h(t)}).
\end{equation*}
By the arbitrariness of $h\in {\Gamma}_a$, we get $\widetilde{m}_\tau(a)\leq m_\tau(a)$. Hence, we have $\widetilde{m}_\tau(a)= m_\tau(a)$, and $m_\tau(a)\geq\max\{\mathcal J(\hat{u}),\mathcal J(\tilde{u})\}$ follows from the definition of $m_\tau(a)$.
\end{proof}

Learning from \cite[Proposition 2.2]{jeanjean}, by the standard Ekeland variational principle and pseudo-gradient flow, we have the following proposition, which gives the existence of the $(PS)_{\widetilde{m}_\tau(a)}$ sequence for $\widetilde{\mathcal{J}}(u,\beta)$ on $S_r(a)\times \mathbb{R}$.
\begin{proposition}\label{psse}
Let $\widetilde{h}_n\subset \widetilde{\Gamma}_a$ be such that
\begin{equation*}
  \max\limits _{t\in[0,1]}\widetilde{\mathcal J}({\widetilde{h}_n(t)})\leq \widetilde{m}_\tau(a)+\frac{1}{n}.
\end{equation*}
Then there exists a sequence $\{(v_n,\beta_n)\}\subset S_r(a)\times\mathbb{R} $ such that as $n\rightarrow\infty$,

$(i)$ $\widetilde{\mathcal J}(v_n,\beta_n)\rightarrow \widetilde{m}_\tau(a)$;

$(ii)$ $\widetilde{\mathcal J}'|_{S_r(a)\times\mathbb{R}}(v_n,\beta_n)\rightarrow0$, i.e.,
\begin{equation*}
  \partial_\beta\widetilde{\mathcal J}(v_n,\beta_n)\rightarrow0 \quad \text{and}\quad\langle\partial_v\widetilde{\mathcal J}(v_n,\beta_n),\widetilde{\varphi}\rangle\rightarrow0
\end{equation*}
for all
\begin{equation*}
  \widetilde{\varphi}\in T_{v_n,\beta_n}=\Big\{\widetilde{\varphi}=(\widetilde{\varphi}_1,\widetilde{\varphi}_2)\in W_{rad}^{s,N/s}(\mathbb{R}^N)\times \mathbb{R}:\int_{\mathbb{R}^N}|v_n|^{\frac{N}{s}-2}v_n\widetilde{\varphi}_1\,\mathrm{d}x=0\Big\}.
\end{equation*}
\end{proposition}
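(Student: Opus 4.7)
The plan is to follow the standard Ekeland–variational–principle plus deformation strategy underlying \cite[Proposition 2.2]{jeanjean}, carried out on the product $C^1$ Banach manifold $S_r(a)\times\mathbb{R}$. Since $N/s>1$, the map $u\mapsto\int_{\mathbb{R}^N}|u|^{N/s}\,\mathrm{d}x$ is $C^1$ on $W_{rad}^{s,N/s}(\mathbb{R}^N)$ with $a^{N/s}$ a regular value, so $S_r(a)$ is a $C^1$ submanifold of codimension one and $S_r(a)\times\mathbb{R}$ inherits a $C^1$ product-manifold structure whose tangent space at $(v,\beta)$ is exactly the $T_{v,\beta}$ displayed in the statement (the $\mathbb{R}$-direction is unconstrained). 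I endow $\widetilde{\Gamma}_a$ with the complete metric
\[
d(\widetilde{h},\widetilde{g})=\max_{t\in[0,1]}\Bigl(\|\widetilde{h}_1(t)-\widetilde{g}_1(t)\|_{W^{s,N/s}}+|\widetilde{h}_2(t)-\widetilde{g}_2(t)|\Bigr),
\]
and consider the continuous functional $F(\widetilde{h}):=\max_{t\in[0,1]}\widetilde{\mathcal{J}}(\widetilde{h}(t))$, which is bounded below by $\widetilde{m}_\tau(a)>0$.

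Apply Ekeland's variational principle to $F$ on $(\widetilde{\Gamma}_a,d)$ with the given $\widetilde{h}_n$: this yields a perturbed path $\widetilde{h}_n^*\in\widetilde{\Gamma}_a$ with $F(\widetilde{h}_n^*)\leq F(\widetilde{h}_n)$, $d(\widetilde{h}_n^*,\widetilde{h}_n)\leq 1/\sqrt{n}$, and
\[
F(\widetilde{h})\;\geq\; F(\widetilde{h}_n^*)-\tfrac{1}{\sqrt{n}}\,d(\widetilde{h},\widetilde{h}_n^*)\quad\text{for every }\widetilde{h}\in\widetilde{\Gamma}_a.
\]
Set $M_n:=\{t\in[0,1]:\widetilde{\mathcal{J}}(\widetilde{h}_n^*(t))=F(\widetilde{h}_n^*)\}$; this set is compact and, since by Lemma \ref{minimax3} we have $\max\{\mathcal{J}(\hat u),\mathcal{J}(\tilde u)\}<\widetilde{m}_\tau(a)\leq F(\widetilde{h}_n^*)$, it remains bounded away from the endpoints $\{0,1\}$ uniformly in $n$ once $n$ is large.

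The central claim is that $\inf_{t\in M_n}\bigl\|\widetilde{\mathcal{J}}'|_{S_r(a)\times\mathbb{R}}(\widetilde{h}_n^*(t))\bigr\|\to 0$ as $n\to\infty$; once this is shown, selecting any $t_n\in M_n$ and setting $(v_n,\beta_n):=\widetilde{h}_n^*(t_n)$ gives (i)--(ii). I would argue by contradiction: suppose along a subsequence the infimum stays $\geq\delta>0$. Build a locally Lipschitz pseudo-gradient vector field $V$ on an open neighborhood of $\widetilde{h}_n^*(M_n)$ in $T(S_r(a)\times\mathbb{R})$ satisfying $\|V\|\leq 2/\delta$ and $\langle\widetilde{\mathcal{J}}',V\rangle\geq 1$, multiply by a Urysohn cutoff that vanishes near $(\hat u,0),(\tilde u,0)$ (possible because $M_n$ avoids a neighborhood of the endpoints), integrate $-V$ for a short time $\varepsilon>0$ to produce a deformation $\eta_\varepsilon:S_r(a)\times\mathbb{R}\to S_r(a)\times\mathbb{R}$ with $d_{S_r(a)\times\mathbb{R}}(\eta_\varepsilon(v,\beta),(v,\beta))=O(\varepsilon)$ and $\widetilde{\mathcal{J}}(\eta_\varepsilon(v,\beta))\leq \widetilde{\mathcal{J}}(v,\beta)-\varepsilon/2$ on the relevant neighborhood. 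A standard continuity argument at the max then gives $F(\eta_\varepsilon\circ\widetilde{h}_n^*)\leq F(\widetilde{h}_n^*)-\varepsilon/4$, while $d(\eta_\varepsilon\circ\widetilde{h}_n^*,\widetilde{h}_n^*)=O(\varepsilon)$; comparing with the Ekeland inequality for $n$ large enough that $1/\sqrt{n}$ is smaller than the quotient of these quantities produces the contradiction.

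The main obstacle is ensuring the pseudo-gradient flow preserves the nonlinear constraint $\int|v|^{N/s}\mathrm{d}x=a^{N/s}$. When $N/s<2$, the derivative $u\mapsto|u|^{N/s-2}u$ is merely Hölder continuous, so one cannot use an implicit-function-theorem retraction; instead, as in \cite{jeanjean}, one constructs $V$ by a partition of unity that selects, at each point, a genuine tangent direction of descent for $\widetilde{\mathcal{J}}$, obtaining a locally Lipschitz section of $T(S_r(a)\times\mathbb{R})$. Its flow then automatically stays on the manifold, and the rest of the argument is a routine adaptation of the classical minimax-deformation scheme.
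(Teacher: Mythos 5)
Your proposal is correct and follows exactly the route the paper intends: the paper gives no proof of this proposition at all, merely invoking ``the standard Ekeland variational principle and pseudo-gradient flow'' via \cite[Proposition 2.2]{jeanjean}, and your write-up is a faithful reconstruction of that standard minimax argument on $S_r(a)\times\mathbb{R}$. The only cosmetic quibble is that the strict inequality $\max\{\mathcal J(\hat u),\mathcal J(\tilde u)\}<\widetilde m_\tau(a)$ you use to keep $M_n$ away from the endpoints follows from Lemma \ref{minimax2} together with the intermediate value theorem (every path must cross $\mathcal B$), not directly from the $\geq$ stated in Lemma \ref{minimax3}.
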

\begin{lemma}\label{pssequence}
For the sequence $\{(v_n,\beta_n)\}\subset S_r(a)\times\mathbb{R}$ obtained in Proposition \ref{psse}, setting $u_n=\mathcal{H}(v_n,\beta_n)$, then as $n\rightarrow\infty$, we have

$(i)$ ${\mathcal J}(u_n)\rightarrow {m}_\tau(a)$;

$(ii)$ $P(u_n)\rightarrow0$, where
  \begin{align*}
  P(u)=&\int_{\mathbb{R}^N}\int_{\mathbb{R}^N}\frac{|u(x)-u(y)|^{N/s}}{|x-y|^{2N}}\mathrm{d}x\mathrm{d}y+
  \frac{2N-\mu}{2s}\int_{\mathbb{R}^N}(I_\mu*F(u))F(u)\mathrm{d}x -\int_{\mathbb{R}^N}(I_\mu*F(u))f(u)u\mathrm{d}x;
  \end{align*}

$(iii)$ ${\mathcal J}'|_{S_r(a)}(u_n)\rightarrow0$, i.e.,
\begin{equation*}
  \langle{\mathcal J}'(u_n),{\varphi}\rangle\rightarrow0
\qquad
\mbox{for\ all}\ \ \ \
  {\varphi}\in T_{u_n}=\Big\{{\varphi}\in W_{rad}^{s,N/s}(\mathbb{R}^N):\int_{\mathbb{R}^N}|u_n|^{\frac{N}{s}-2}u_n{\varphi}\,\mathrm{d}x=0\Big\}.
\end{equation*}
\end{lemma}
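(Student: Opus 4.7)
The plan is to use the identity $\widetilde{\mathcal{J}}(v,\beta)=\mathcal{J}(\mathcal{H}(v,\beta))$ together with the explicit scaling behavior of the Gagliardo and Choquard terms under $\mathcal{H}$, so that the three items become direct translations of Proposition~\ref{psse}. Item (i) is immediate: $\mathcal{J}(u_n)=\widetilde{\mathcal{J}}(v_n,\beta_n)$ by the very definition of $u_n$, so Proposition~\ref{psse}(i) together with $\widetilde{m}_\tau(a)=m_\tau(a)$ from Lemma~\ref{minimax3} yields $\mathcal{J}(u_n)\to m_\tau(a)$ at once.

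For (ii) I would compute $\partial_\beta\widetilde{\mathcal{J}}(v,\beta)$ in closed form. Using $[\mathcal{H}(v,\beta)]_{s,N/s}^{N/s}=e^{N\beta}[v]_{s,N/s}^{N/s}$, the Gagliardo piece contributes $s[u]_{s,N/s}^{N/s}$ to the $\beta$-derivative, where $u:=\mathcal{H}(v,\beta)$. For the Choquard piece the substitution $z\mapsto e^\beta z$ gives
\begin{equation*}
\int_{\mathbb{R}^N}(I_\mu\ast F(u))F(u)\,\mathrm{d}x=e^{(\mu-2N)\beta}\int_{\mathbb{R}^N}\int_{\mathbb{R}^N}\frac{F(e^{s\beta}v(x))F(e^{s\beta}v(y))}{|x-y|^\mu}\,\mathrm{d}x\mathrm{d}y,
\end{equation*}
and differentiating in $\beta$ (via $\partial_\beta F(e^{s\beta}v)=se^{s\beta}v\,f(e^{s\beta}v)$) and undoing the change of variables produces
\begin{equation*}
\partial_\beta\widetilde{\mathcal{J}}(v,\beta)=s[u]_{s,N/s}^{N/s}+\frac{2N-\mu}{2}\int_{\mathbb{R}^N}(I_\mu\ast F(u))F(u)\,\mathrm{d}x-s\int_{\mathbb{R}^N}(I_\mu\ast F(u))f(u)u\,\mathrm{d}x=sP(u).
\end{equation*}
Specializing to $(v_n,\beta_n)$ and invoking $\partial_\beta\widetilde{\mathcal{J}}(v_n,\beta_n)\to 0$ from Proposition~\ref{psse}(ii) gives $P(u_n)\to 0$.

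For (iii) I would transfer test functions through $\mathcal{H}$. Given $\varphi\in T_{u_n}$, set $\varphi_1:=\mathcal{H}(\varphi,-\beta_n)$; the substitution $x\mapsto e^{\beta_n}x$ shows
\begin{equation*}
\int_{\mathbb{R}^N}|v_n|^{\frac{N}{s}-2}v_n\varphi_1\,\mathrm{d}x=\int_{\mathbb{R}^N}|u_n|^{\frac{N}{s}-2}u_n\varphi\,\mathrm{d}x=0,
\end{equation*}
so $(\varphi_1,0)\in\widetilde{T}_{v_n,\beta_n}$. Since $v\mapsto\mathcal{H}(v,\beta_n)$ is linear with $\mathcal{H}(\varphi_1,\beta_n)=\varphi$, the chain rule gives
\begin{equation*}
\langle\mathcal{J}'(u_n),\varphi\rangle=\langle\partial_v\widetilde{\mathcal{J}}(v_n,\beta_n),\varphi_1\rangle=\langle\widetilde{\mathcal{J}}'(v_n,\beta_n),(\varphi_1,0)\rangle,
\end{equation*}
so Proposition~\ref{psse}(ii) would close the argument once $\|\varphi_1\|_{W^{s,N/s}}$ is controlled uniformly in $n$ by $\|\varphi\|_{W^{s,N/s}}$.

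The main obstacle is precisely this last uniform control: the scaling identity $\|\varphi_1\|_{W^{s,N/s}}^{N/s}=e^{-N\beta_n}[\varphi]_{s,N/s}^{N/s}+\|\varphi\|_{N/s}^{N/s}$ behaves harmlessly for $\beta_n\geq 0$ but may blow up if $\beta_n\to-\infty$, so I would need a lower bound on $\beta_n$. I expect to extract this from the mountain-pass geometry: a subsequence with $\beta_n\to-\infty$ forces $[u_n]_{s,N/s}\to 0$, and then the bound \eqref{Ft} on $F$, the compact radial embedding $W^{s,N/s}_{rad}(\mathbb{R}^N)\hookrightarrow L^p(\mathbb{R}^N)$ for $p>N/s$, and the Hardy--Littlewood--Sobolev inequality \eqref{HLS} would drive $\int(I_\mu\ast F(u_n))F(u_n)\,\mathrm{d}x\to 0$, hence $\mathcal{J}(u_n)\to 0$, contradicting $m_\tau(a)>0$ from Lemma~\ref{minimax3}. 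Alternatively, the Jeanjean-type construction underlying Proposition~\ref{psse} can be run on a compact window of $\beta$-values around the path endpoints (which already have $\beta=0$), so that $|\beta_n|\leq C$ is built into the construction and the transfer in (iii) becomes routine.
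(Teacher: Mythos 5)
Your proposal follows essentially the same route as the paper: (i) is the identity $\mathcal{J}(u_n)=\widetilde{\mathcal{J}}(v_n,\beta_n)$ plus $\widetilde{m}_\tau(a)=m_\tau(a)$, (ii) is the same computation $\partial_\beta\widetilde{\mathcal{J}}(v_n,\beta_n)=sP(u_n)$, and (iii) is the same transfer of test functions through $\mathcal{H}(\cdot,\pm\beta_n)$ with the same verification of the tangency condition. The one point where you go beyond the paper --- the need to control $\|\varphi_1\|_{W^{s,N/s}}$ uniformly, hence to bound $\beta_n$ from below --- is a genuine subtlety that the paper's proof silently skips; your second suggested fix (the Jeanjean-type construction in Proposition \ref{psse} produces $(v_n,\beta_n)$ close to paths lying in $S_r(a)\times\{0\}$, so $|\beta_n|$ is bounded) is the standard and correct resolution, whereas your first suggested fix is shakier since $\beta_n\to-\infty$ alone does not force $[u_n]_{s,N/s}=e^{s\beta_n}[v_n]_{s,N/s}\to0$ without a bound on $[v_n]_{s,N/s}$.
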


\begin{proof}
For $(i)$, since ${\mathcal J}(u_n)=\widetilde{\mathcal J}(v_n,\beta_n)$ and ${m}_\tau(a)= \widetilde{m}_\tau(a)$, we get the conclusion.

For $(ii)$, we have
\begin{align*}
  \partial_\beta\widetilde{\mathcal J}(v_n,\beta_n)=&\partial_\beta\bigg[\frac{s}{N}e^{N\beta_n}[v_n]_{s,N/s}^{N/s}-\frac{e^{(\mu-2N)\beta_n}}{2}\int_{\mathbb{R}^N}(I_\mu*F(e^{s\beta_n}v_n))F(e^{s\beta_n}v_n)\mathrm{d}x\bigg]\\
=&se^{N\beta_n}[v_n]_{s,N/s}^{N/s}+\frac{2N-\mu}{2} e^{(\mu-2N)\beta_n} \int_{\mathbb{R}^N}  (I_\mu*F(e^{s\beta_n}v_n))F(e^{s\beta_n} v_n)\mathrm{d}x \\&- se^{(\mu-2N)\beta_n} \int_{\mathbb{R}^N}  (I_\mu*F(e^{s\beta_n}v_n))f(e^{s\beta_n} v_n)e^{s\beta_n} v_n\mathrm{d}x=sP(u_n).
\end{align*}
Thus $P(u_n)\rightarrow0$ as $n\rightarrow\infty$.

For $(iii)$, on the one hand, for any $\widetilde{\varphi}=(\widetilde{\varphi}_1,\widetilde{\varphi}_2)\in T_{v_n}$, we have
\begin{align*}
\langle\partial_v\widetilde{\mathcal J}(v_n,s_n),\widetilde{\varphi}\rangle=&e^{N\beta_n}\int_{\mathbb{R}^N}\int_{\mathbb{R}^N}\frac{|v_n(x)-v_n(y)|^{\frac{N}{s}-2}[v_n(x)-v_n(y)][\widetilde{\varphi}_1(x)-\widetilde{\varphi}_1(y)]}{|x-y|^{2N}}\mathrm{d}x\mathrm{d}y
\\&-e^{(\mu-2N)\beta_n}\int_{\mathbb{R}^N}(I_\mu*F(e^{s\beta_n}v_n))f(e^{s\beta_n}v_n)e^{s\beta_n}\widetilde{\varphi}_1\mathrm{d}x.
  \end{align*}
On the other hand,  \begin{align*}
                       \langle{\mathcal J}'(u_n),{\varphi}\rangle=&\int_{\mathbb{R}^N}\int_{\mathbb{R}^N}\frac{|u_n(x)-u_n(y)|^{\frac{N}{s}-2}[u_n(x)-u_n(y)][\varphi(x)-\varphi(y)]}{|x-y|^{2N}}\mathrm{d}x\mathrm{d}y
                       \\&-\int_{\mathbb{R}^N}(I_\mu*F(u_n))f(u_n)\varphi \mathrm{d}x\\
=& e^{(N-s)\beta_n}\int_{\mathbb{R}^N}\int_{\mathbb{R}^N}\frac{|v_n(x)-v_n(y)|^{\frac{N}{s}-2}[v_n(x)-v_n(y)][\varphi(e^{-\beta_n}x)-\varphi(e^{-\beta_n}y)]}{|x-y|^{2N}}\mathrm{d}x\mathrm{d}y
                     \\&-e^{(\mu-2N)\beta_n}\int_{\mathbb{R}^N}(I_\mu*F(e^{s\beta_n}v_n))f(e^{s\beta_n}v_n))\varphi(e^{-\beta_n}x) \mathrm{d}x.
                    \end{align*}
Taking $\varphi(e^{-\beta_n}x) =e^{s\beta_n}\widetilde{\varphi}_1$, then $\langle{\mathcal J}'(u_n),{\varphi}\rangle\rightarrow 0$ as $n\rightarrow\infty$, and $\varphi(x) =e^{s\beta_n}\widetilde{\varphi}_1(e^{\beta_n}x)$. If we can prove $\varphi \in T_{u_n}$, we get $(iii)$. In fact, it follows from the following equality:
\begin{equation*}
  \int_{\mathbb{R}^N}|u_n|^{\frac{N}{s}-2}u_n{\varphi}\mathrm{d}x=\int_{\mathbb{R}^N}|e^{s\beta_n}v_n(e^{\beta_n}x)|^{\frac{N}{s}-2}e^{s\beta_n}v_n(e^{\beta_n}x)e^{s\beta_n}\widetilde{\varphi}_1(e^{\beta_n}x)\mathrm{d}x= \int_{\mathbb{R}^N}|v_n|^{\frac{N}{s}-2}v_n\widetilde{{\varphi}}_1\mathrm{d}x=0.
\end{equation*}
\end{proof}

\begin{lemma}\label{energy}
Assume that $(f_1)$-$(f_4)$ hold, then we have $\lim\limits_{\tau\rightarrow+\infty}m_\tau(a)=0$.
\end{lemma}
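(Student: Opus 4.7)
The plan is to exploit assumption $(f_4)$, which makes the nonlocal Choquard term in $\mathcal{J}$ grow quadratically in $\tau$, in order to push the mountain-pass level $m_\tau(a)$ down to $0$. Concretely, I will exhibit, for each large $\tau$, a path in $\Gamma_a$ along which $\sup\mathcal{J}$ is $o_\tau(1)$; combined with the lower bound $m_\tau(a)>0$ furnished by Lemma \ref{minimax3}, this will yield the claim.

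Fix once and for all a nonzero radial function $u_0 \in S_r(a)$, and for each $\tau$ choose $s_1=s_1(\tau)\ll 0$ and $s_2=s_2(\tau)\gg 0$ so that $\hat{u}=\mathcal{H}(u_0,s_1)$ and $\tilde{u}=\mathcal{H}(u_0,s_2)$ satisfy conditions $(i)$--$(iii)$ of Lemma \ref{minimax3}. The straight-line path $\gamma(t):=\mathcal{H}(u_0,(1-t)s_1+t s_2)$ lies in $\Gamma_a$, so
\[
m_\tau(a) \;\le\; \max_{t \in [0,1]} \mathcal{J}(\gamma(t)) \;\le\; \sup_{\beta \in \mathbb{R}} \mathcal{J}(\mathcal{H}(u_0,\beta)).
\]
Using the scaling identity $[\mathcal{H}(u_0,\beta)]_{s,N/s}^{N/s}=e^{N\beta}[u_0]_{s,N/s}^{N/s}$ already established in Lemma \ref{mountain}, together with the pointwise bound $F(t)\ge\tfrac{\tau}{\sigma}|t|^\sigma$ from $(f_4)$ and the change of variables $x\mapsto e^\beta x$ inside the double integral of the Choquard term, I obtain
\[
\mathcal{J}(\mathcal{H}(u_0,\beta)) \;\le\; A\,e^{N\beta} - \tau^2 B\,e^{(2s\sigma + \mu - 2N)\beta},
\]
where $A:=\tfrac{s}{N}[u_0]_{s,N/s}^{N/s}$ and $B:=\tfrac{1}{2\sigma^2}\int_{\mathbb{R}^N}(I_\mu\ast|u_0|^\sigma)|u_0|^\sigma\,dx$ are positive constants depending only on $u_0$.

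The endgame is a one-variable optimization. Hypotheses $(f_1)$ and $(f_4)$ together force $\sigma>\kappa+1>\tfrac{3N-\mu}{2s}$ (otherwise the pointwise lower bound $F(t)\ge\tfrac{\tau}{\sigma}|t|^\sigma$ would contradict $|F(t)|=o(|t|^{\kappa+1})$ near $0$), so the exponent $a:=2s\sigma+\mu-2N$ strictly exceeds $N$. Maximizing $g(\beta)=Ae^{N\beta}-\tau^2 Be^{a\beta}$ in $\beta$ gives
\[
\sup_{\beta\in\mathbb{R}} g(\beta) \;=\; A\cdot\frac{a-N}{a}\cdot\left(\frac{AN}{\tau^2 Ba}\right)^{N/(a-N)} \;\longrightarrow\; 0 \quad\text{as } \tau\to\infty,
\]
since $N/(a-N)>0$ and the base tends to $0$. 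Combined with $m_\tau(a)>0$, this proves $m_\tau(a)\to 0$. The only non-bookkeeping point I anticipate is verifying the strict inequality $a>N$ from the hypotheses; the rest is exactly the scaling computation from Lemma \ref{mountain} and costs essentially nothing beyond it.
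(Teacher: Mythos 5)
Your proof is correct and follows the same overall strategy as the paper: run the dilation path $\mathcal{H}(u_0,(1-t)s_1+ts_2)$, lower-bound the Choquard term by $\tau^2$ times a power $e^{a\beta}$ with $a>N$, and maximize the resulting one-variable function $Ae^{N\beta}-\tau^2Be^{a\beta}$ to get $m_\tau(a)\le C\tau^{-2N/(a-N)}\to0$. The one genuine difference is where the exponent condition $a>N$ comes from. The paper first applies the monotonicity inequality \eqref{fff} (a consequence of $(f_3)$) to extract a factor $e^{2s\theta\beta}$ and then invokes $\theta>\frac{3N-\mu}{2s}$; you instead apply $(f_4)$ directly to the rescaled function, getting the exponent $2s\sigma+\mu-2N$, and then observe that $(f_1)$ and $(f_4)$ are mutually consistent only if $\sigma>\kappa+1$, which together with $\kappa>\frac{3N-2s-\mu}{2s}$ yields $\sigma>\frac{3N-\mu}{2s}$ and hence $2s\sigma+\mu-2N>N$. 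That observation is valid (since $(f_1)$ forces $F(t)=o(|t|^{\kappa+1})$ as $t\to0$, the bound $F(t)\ge\frac{\tau}{\sigma}|t|^\sigma$ cannot hold near $0$ unless $\sigma>\kappa+1$), and your variant has the mild advantage of not using $(f_3)$ at all in this lemma, at the cost of relying on a constraint on $\sigma$ that is only implicit in the hypotheses rather than the explicit condition on $\theta$ that the paper uses. Both routes give a decay rate of the same form in $\tau$, and in both one should note (as neither proof spells out) that $\int_{\mathbb{R}^N}(I_\mu*|u_0|^\sigma)|u_0|^\sigma\,\mathrm{d}x$ is finite, which follows from Proposition \ref{hardy} and Lemma \ref{GN} since $\frac{2N\sigma}{2N-\mu}>\frac{N}{s}$.
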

\begin{proof}
For any fixed $u_0\in W^{s,N/s}(\mathbb{R}^N)$, as the proof of Lemma \ref{minimax3},  for any $t\in [0,1]$, $h_0(t)=\mathcal{H}(u_0,(1-t)s_1+ts_2)$ is a path in $\Gamma_a$. Hence, by $(f_4)$ and \eqref{fff}, we have
\begin{align*}
  m_\tau(a)\leq \max\limits_{t\in[0,1]}\mathcal J({h_0(t)})\leq& \max\limits_{\kappa>0}\Big\{\frac{s\kappa^N}{N}[u_0]_{s,N/s}^{N/s}
-\frac{\tau^2\kappa^{(2 s\theta+\mu+2\sigma-2N)}}{2\sigma^2}\int_{\mathbb{R}^N}(I_\mu*|u_0|^\sigma)|u_0|^\sigma \mathrm{d}x\Big\}\\=&C\Big(\frac{1}{\tau^2}\Big)^{\frac{N}{2 s\theta+\mu+2\sigma-3N}}.
\end{align*}
This together with $\theta>\frac{3N-\mu}{2s}$ and $\sigma>0$ yields $\lim\limits_{\tau\rightarrow\infty}m_\tau(a)=0$.
\end{proof}

For the sequence $\{u_n\}$ obtained in Lemma $\ref{pssequence}$, 
by the Lagrange multipliers rule, there exists a sequence $\{\lambda_n\}\subset \mathbb{R}$ such that
\begin{align}\label{key}
(-\Delta)^s_{N/s}u_n=\lambda_n|u_n|^{\frac{N}{s}-2}u_n+(I_\mu*F(u_n))f(u_n)+o_n(1).
\end{align}

\begin{lemma}\label{bdd}
Assume that $(f_1)$-$(f_3)$ hold, then $\{\lambda_{n}\}$ is bounded in $\mathbb{R}$. 
\end{lemma}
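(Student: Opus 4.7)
The strategy is two-fold: first to show that $\{u_n\}$ is bounded in $W^{s,N/s}(\mathbb{R}^N)$ using jointly $\mathcal J(u_n)\to m_\tau(a)$, $P(u_n)\to 0$ and $(f_3)$, and then to test \eqref{key} against $u_n$ itself so that $\lambda_n$ can be expressed in terms of quantities already controlled in the first step.

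\emph{Step 1: boundedness of $\{u_n\}$.} Write $A_n:=[u_n]_{s,N/s}^{N/s}$, $B_n:=\int_{\mathbb{R}^N}(I_\mu*F(u_n))F(u_n)\mathrm{d}x$, and $C_n:=\int_{\mathbb{R}^N}(I_\mu*F(u_n))f(u_n)u_n\mathrm{d}x$. Lemma \ref{pssequence} provides
\begin{align*}
\frac{s}{N}A_n-\frac{1}{2}B_n=m_\tau(a)+o_n(1),\qquad A_n+\frac{2N-\mu}{2s}B_n-C_n=o_n(1).
\end{align*}
By $(f_3)$ one has $F>0$ on $\mathbb{R}\setminus\{0\}$ and $\theta F(t)\leq tf(t)$, so $I_\mu*F(u_n)\geq 0$ pointwise and $C_n\geq \theta B_n$. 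Inserting $C_n\geq\theta B_n$ into the Pohozaev-type identity gives $A_n\geq(\theta-\tfrac{2N-\mu}{2s})B_n+o_n(1)$ with $\theta-\tfrac{2N-\mu}{2s}>\tfrac{N}{2s}>0$, so $B_n\leq \frac{2s}{2s\theta-2N+\mu}(A_n+o_n(1))$. Combining this with $B_n=\frac{2s}{N}A_n-2m_\tau(a)+o_n(1)$ from the energy identity produces
\begin{align*}
\Big(\frac{2s}{N}-\frac{2s}{2s\theta-2N+\mu}\Big)A_n\leq 2m_\tau(a)+o_n(1),
\end{align*}
and the coefficient of $A_n$ equals $\frac{2s(2s\theta-3N+\mu)}{N(2s\theta-2N+\mu)}$, which is strictly positive because $\theta>\frac{3N-\mu}{2s}$. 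Hence $A_n$ is bounded, and together with $\|u_n\|_{N/s}=a$ this yields the boundedness of $\{u_n\}$ in $W^{s,N/s}(\mathbb{R}^N)$.

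\emph{Step 2: boundedness of $\lambda_n$.} Since $\{u_n\}$ is bounded in $W^{s,N/s}(\mathbb{R}^N)$ by Step 1, pairing \eqref{key} with $u_n$ (the error term $o_n(1)$ absorbs $\|u_n\|_{W^{s,N/s}}$) yields
\begin{align*}
A_n=\lambda_n a^{N/s}+C_n+o_n(1).
\end{align*}
Subtracting the Pohozaev relation $A_n-C_n=-\tfrac{2N-\mu}{2s}B_n+o_n(1)$ leads at once to
\begin{align*}
\lambda_n a^{N/s}=-\frac{2N-\mu}{2s}B_n+o_n(1).
\end{align*}
By Step 1 one has $0\leq B_n\leq \frac{2s}{2s\theta-2N+\mu}(A_n+o_n(1))\leq C$, so $\{\lambda_n\}$ is bounded.

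The delicate point is Step 1. One cannot pair $\mathcal J'(u_n)$ against $u_n$ directly because $u_n\notin T_{u_n}$, so the usual way of bounding a $(PS)$ sequence by combining the energy identity with the $\mathcal J'$-test is unavailable. The Pohozaev-type identity $P(u_n)\to 0$ furnished by Lemma \ref{pssequence}(ii) plays the role of the missing test, and the assumption $\theta>(3N-\mu)/(2s)$ is calibrated precisely so that eliminating $B_n$ between the energy and Pohozaev identities produces a positive coefficient in front of $A_n$. Once boundedness of $\{u_n\}$ is secured, Step 2 is an immediate algebraic manipulation.
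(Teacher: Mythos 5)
Your proof is correct and follows essentially the same route as the paper: both arguments combine the energy level $\mathcal J(u_n)\to m_\tau(a)$, the Pohozaev-type identity $P(u_n)\to 0$ and $(f_3)$ to bound $[u_n]_{s,N/s}^{N/s}$ and the nonlocal terms, and then test \eqref{key} with $u_n$ to express $\lambda_n a^{N/s}$ through these bounded quantities. The only difference is cosmetic: the paper eliminates variables by computing $\mathcal J(u_n)-\tfrac{s}{N}P(u_n)$ to bound $\int(I_\mu*F(u_n))f(u_n)u_n\,\mathrm{d}x$ first, whereas you first bound $\int(I_\mu*F(u_n))F(u_n)\,\mathrm{d}x$ in terms of the seminorm.
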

\begin{proof}
From $P(u_n)\rightarrow0$ and $\mathcal{J}(u_n)\rightarrow m_\tau(a)$ as $n\rightarrow\infty$,  using $(f_3)$, we have
\begin{align*}
  m_\tau(a)+o_n(1)=\mathcal{J}(u_n)-\frac{s}{N}P(u_n)\geq\frac{2s\theta+\mu -3N}{2N\theta} \int_{\mathbb{R}^N}(I_\mu*F(u_n))f(u_n)u_n\mathrm{d}x.
\end{align*}
Since $\theta>\frac{3N-\mu}{2s}$, up to a subsequence, we get
\begin{align}\label{1n}
\int_{\mathbb{R}^N}(I_\mu*F(u_n))f(u_n)u_n\mathrm{d}x\leq \frac{2N\theta m_\tau(a)}{2s\theta+\mu-3N}.
\end{align}
 Using $(f_3)$ and $P(u_n)\rightarrow0$ as $n\rightarrow\infty$ again, we know $\{u_n\}$ is bounded in $W_{rad}^{s,N/s}(\mathbb{R}^N)$. Up to a subsequence, we assume that $u_n\rightharpoonup u$ in $W_{rad}^{s,N/s}(\mathbb{R}^N)$.
Testing \eqref{key} with $u_n$, we have
\begin{equation}\label{bdd1}
  \lambda_{n}a^{N/s}=\int_{\mathbb{R}^N}\int_{\mathbb{R}^N}\frac{|u_n(x)-u_n(y)|^{N/s}}{|x-y|^{2N}}\mathrm{d}x\mathrm{d}y-\int_{\mathbb{R}^N}(I_\mu*F(u_n)) f({u_n})u_n \mathrm{d}x+o_n(1),
\end{equation}
which implies $\{\lambda_{n}\}$ is bounded in $\mathbb{R}$. Up to a subsequence, we assume that $\lambda_{n}\rightarrow\lambda$ as $n\rightarrow\infty$.
\end{proof}

\begin{lemma}\label{non}
Assume that $(f_1)$-$(f_4)$ hold, if
\begin{equation*}
m_\tau(a)<\frac{s}{N}\Big[\frac{(2N-\mu)\alpha_*}{2N \alpha_0}\Big]^{(N-s)/{s}},
\end{equation*}
then $u_n\rightharpoonup u\neq0$ in $W_{rad}^{s,N/s}(\mathbb{R}^N)$.
\end{lemma}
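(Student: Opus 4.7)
The plan is to argue by contradiction: suppose $u=0$. Since $\{u_n\}$ is bounded in $W_{rad}^{s,N/s}(\mathbb R^N)$ by Lemma \ref{bdd}, the compact radial embedding $W_{rad}^{s,N/s}(\mathbb R^N)\hookrightarrow L^q(\mathbb R^N)$ for $q\in(N/s,\infty)$ gives $u_n\to 0$ strongly in every such $L^q$. The overall goal is to show that both Choquard integrals $\int(I_\mu\ast F(u_n))F(u_n)\,\mathrm dx$ and $\int(I_\mu\ast F(u_n))f(u_n)u_n\,\mathrm dx$ tend to $0$, and then extract a contradiction by comparing two different expressions for the Lagrange multiplier $\lambda_n$.

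The first key step will be to convert the hypothesis on $m_\tau(a)$ into a strict bound of the Gagliardo seminorm below the Trudinger--Moser/HLS threshold. I would combine $\mathcal{J}(u_n)\to m_\tau(a)$, $P(u_n)\to 0$, and $(f_3)$ (which gives $F\geq0$ and $\int(I_\mu\ast F(u_n))f(u_n)u_n\geq\theta\int(I_\mu\ast F(u_n))F(u_n)$), eliminate $\int(I_\mu\ast F(u_n))f(u_n)u_n$ from the Pohozaev identity to derive $\int(I_\mu\ast F(u_n))F(u_n)\leq\frac{2s}{2s\theta+\mu-2N}[u_n]_{s,N/s}^{N/s}+o_n(1)$, and reinsert this into the energy identity to bound $[u_n]_{s,N/s}^{N/s}$ by a multiple of $m_\tau(a)$; the hypothesis $m_\tau(a)<\frac{s}{N}\bigl[\frac{(2N-\mu)\alpha_*}{2N\alpha_0}\bigr]^{(N-s)/s}$ then places $\sup_n[u_n]_{s,N/s}^{N/s}$ strictly below $\bigl[\frac{(2N-\mu)\alpha_*}{2N\alpha_0}\bigr]^{(N-s)/s}$. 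Next I fix $\alpha>\alpha_0$ close to $\alpha_0$ and $\nu>1$ close to $1$ so that $\frac{2N\alpha\nu}{2N-\mu}\sup_n[u_n]_{s,N/s}^{N/(N-s)}<\alpha_*$, and apply Proposition \ref{hardy} to get $\int(I_\mu\ast F(u_n))F(u_n)\leq C\|F(u_n)\|_{2N/(2N-\mu)}^2$. Using \eqref{Ft}, H\"older's inequality with exponents $\nu,\nu'$, and Lemma \ref{tm}, exactly as in the proof of Lemma \ref{mountain}(i), the right-hand side is bounded by a constant multiple of $\|u_n\|_{2N(\kappa+1)/(2N-\mu)}^{2(\kappa+1)}+\|u_n\|_{2Nq\nu'/(2N-\mu)}^{2q}$; since both Lebesgue exponents exceed $N/s$ (by $\kappa>(3N-2s-\mu)/(2s)$ and $q>3N/(2s)$) and $u_n\to 0$ strongly in each such space, the whole bound vanishes. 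The same reasoning yields $\int(I_\mu\ast F(u_n))f(u_n)u_n\to 0$.

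Once the Choquard integrals vanish, the contradiction is immediate. By Lemma \ref{minimax3} one has $m_\tau(a)>0$, and $\mathcal{J}(u_n)\to m_\tau(a)$ now forces $\tfrac{s}{N}[u_n]_{s,N/s}^{N/s}\to m_\tau(a)>0$. Testing equation \eqref{key} with $u_n$ gives $\lambda_n a^{N/s}=[u_n]_{s,N/s}^{N/s}-\int(I_\mu\ast F(u_n))f(u_n)u_n\,\mathrm dx+o_n(1)\to\tfrac{N}{s}m_\tau(a)>0$, while subtracting $P(u_n)\to 0$ yields on the other hand $\lambda_n a^{N/s}=-\tfrac{2N-\mu}{2s}\int(I_\mu\ast F(u_n))F(u_n)\,\mathrm dx+o_n(1)\to 0$; these limits conflict, forcing $u\neq 0$. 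I expect the main obstacle to be the first step, i.e.\ squeezing the strict threshold bound on $[u_n]_{s,N/s}^{N/s}$ out of the energy hypothesis, where the Ambrosetti--Rabinowitz type condition $(f_3)$ with $\theta>(3N-\mu)/(2s)$ is indispensable, since it is the only mechanism available to turn the Pohozaev identity into a usable upper estimate of the Choquard term in terms of $[u_n]_{s,N/s}^{N/s}$ and close the loop with the energy identity.
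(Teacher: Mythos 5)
Your overall strategy --- argue by contradiction from $u=0$, kill both Choquard integrals, and then contradict $m_\tau(a)>0$ --- is the same as the paper's, and your closing step via the two expressions for $\lambda_n a^{N/s}$ is a correct, essentially equivalent variant of the paper's ``$P(u_n)\to0$ forces $[u_n]_{s,N/s}\to0$, hence $m_\tau(a)=0$''.

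The genuine gap is in your first key step. From $P(u_n)=o_n(1)$ and $(f_3)$ you correctly get
\begin{equation*}
\int_{\mathbb{R}^N}(I_\mu*F(u_n))F(u_n)\,\mathrm{d}x\;\leq\;\frac{2s}{2s\theta+\mu-2N}\,[u_n]_{s,N/s}^{N/s}+o_n(1),
\end{equation*}
but reinserting this into $\frac{s}{N}[u_n]_{s,N/s}^{N/s}-\frac12\int_{\mathbb{R}^N}(I_\mu*F(u_n))F(u_n)\,\mathrm{d}x=m_\tau(a)+o_n(1)$ only yields
\begin{equation*}
[u_n]_{s,N/s}^{N/s}\;\leq\;\frac{N}{s}\cdot\frac{2s\theta+\mu-2N}{2s\theta+\mu-3N}\,m_\tau(a)+o_n(1),
\end{equation*}
and the factor $\frac{2s\theta+\mu-2N}{2s\theta+\mu-3N}=1+\frac{N}{2s\theta+\mu-3N}$ is strictly larger than $1$. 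Hence the hypothesis $m_\tau(a)<\frac{s}{N}\bigl[\frac{(2N-\mu)\alpha_*}{2N\alpha_0}\bigr]^{(N-s)/s}$ only places $\sup_n[u_n]_{s,N/s}^{N/s}$ below $\frac{2s\theta+\mu-2N}{2s\theta+\mu-3N}\bigl[\frac{(2N-\mu)\alpha_*}{2N\alpha_0}\bigr]^{(N-s)/s}$, which lies strictly above the Trudinger--Moser threshold you need in order to choose $\alpha>\alpha_0$ and $\nu>1$ with $\frac{2N\alpha\nu}{2N-\mu}\sup_n[u_n]_{s,N/s}^{N/(N-s)}<\alpha_*$; without that choice the subsequent estimates on $\|F(u_n)\|_{2N/(2N-\mu)}$ and $\|f(u_n)u_n\|_{2N/(2N-\mu)}$ are not justified. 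The paper avoids this by exploiting the assumption $u=0$ \emph{before} deriving the seminorm bound: it first invokes the compactness result (Lemma \ref{strong2}) to obtain $\int_{\mathbb{R}^N}(I_\mu*F(u_n))F(u_n)\,\mathrm{d}x\to0$, and only then reads off $[u_n]_{s,N/s}^{N/s}\to\frac{N}{s}m_\tau(a)$ from the energy identity, which is below the threshold precisely because the lost factor has disappeared. To repair your argument you must either follow that order, or strengthen the level hypothesis to $m_\tau(a)<\frac{s(2s\theta+\mu-3N)}{N(2s\theta+\mu-2N)}\bigl[\frac{(2N-\mu)\alpha_*}{2N\alpha_0}\bigr]^{(N-s)/s}$ --- which is what is effectively available in the end anyway, since $m_\tau(a)\to0$ as $\tau\to\infty$ by Lemma \ref{energy}.
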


\begin{proof}
If $u=0$, then by Lemma \ref{strong2}, we obtain
\begin{align*}
\int_{\mathbb{R}^N}(I_\mu*F(u_n))F(u_n)\mathrm{d}x\rightarrow0,\quad \text{as $n\rightarrow\infty$}.
\end{align*}
So we can fix $\alpha>\alpha_0$ close to $\alpha_0$ and $\nu>1$ close to $1$ such that
\begin{equation*}
  \frac{2N\alpha \nu}{2N-\mu}\sup\limits_{n\in\mathbb{N}^+}[u_n]_{s,N/s}^{N/{N-s}}<\alpha_*.
\end{equation*}
From $(\ref{tain1})$, by $\kappa>\frac{3N-2s-\mu}{2s}$, $\nu'=\frac{\nu}{\nu-1}$ large enough and the compact embedding, we have
\begin{align*}
\|F(u_n)\|_{\frac{2N}{2N-\mu}}\leq
C\|u_n\|_{\frac{2N(\kappa+1)}{2N-\mu}}^{\kappa+1}+C
\|u_n\|_{\frac{2Nq\nu'}{2N-\mu}}^q\to0,\quad \mbox{as}\ n\to+\infty.
\end{align*}
By a similar argument as above, we infer that $ \|f(u_n)u_n\|_{\frac{2N}{2N-\mu}}\to0$ as $n\to\infty$.
Hence, we obtain
\begin{align*}
\int_{\mathbb{R}^N}(I_\mu*F(u_n))f(u_n)u_n\mathrm{d}x\rightarrow0,\quad \text{as $n\rightarrow\infty$}.
\end{align*}
 Using $P(u_n)\rightarrow 0$ again, we have $[u_n]_{s,N/s}\rightarrow0$ as $n\rightarrow\infty$, then $m_\tau(a)=0$, which is an absurd. Therefore, $u\neq0$.
\end{proof}

\begin{lemma}\label{ne}
Assume that $(f_1)$-$(f_4)$ hold, if
\begin{equation*}
m_\tau(a)<\frac{s}{N}\Big[\frac{(2N-\mu)\alpha_*}{2N \alpha_0}\Big]^{(N-s)/{s}},
\end{equation*}
then $\lambda<0$.
\end{lemma}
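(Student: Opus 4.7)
The plan is to combine the Pohozaev-type identity $P(u_n)\to0$ with the testing of the weak equation against $u_n$ itself, so that the Sobolev seminorm term cancels and $\lambda_n a^{N/s}$ is expressed purely in terms of the Choquard nonlocal quantity $\int(I_\mu\ast F(u_n))F(u_n)\,dx$. Then I pass to the limit and exploit the strict positivity of $F$ coming from $(f_3)$.

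First, recall from the proof of Lemma \ref{bdd} that $\{u_n\}$ is bounded in $W_{rad}^{s,N/s}(\mathbb{R}^N)$, $\lambda_n\to\lambda$ along a subsequence, and
\begin{equation*}
\lambda_n a^{N/s}=[u_n]_{s,N/s}^{N/s}-\int_{\mathbb{R}^N}(I_\mu\ast F(u_n))f(u_n)u_n\,dx+o_n(1).
\end{equation*}
On the other hand, $P(u_n)=o_n(1)$ reads
\begin{equation*}
[u_n]_{s,N/s}^{N/s}=\int_{\mathbb{R}^N}(I_\mu\ast F(u_n))f(u_n)u_n\,dx-\frac{2N-\mu}{2s}\int_{\mathbb{R}^N}(I_\mu\ast F(u_n))F(u_n)\,dx+o_n(1).
\end{equation*}
Substituting the second identity into the first, the seminorm and the $(I_\mu\ast F(u_n))f(u_n)u_n$ terms cancel exactly, leaving
\begin{equation*}
\lambda_n a^{N/s}=-\frac{2N-\mu}{2s}\int_{\mathbb{R}^N}(I_\mu\ast F(u_n))F(u_n)\,dx+o_n(1).
\end{equation*}

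Next, by Lemma \ref{non}, the weak limit $u$ satisfies $u\neq0$. To pass to the limit in the right-hand side I would invoke Lemma \ref{strong2}, which gives $\int(I_\mu\ast F(u_n))F(u_n)\,dx\to\int(I_\mu\ast F(u))F(u)\,dx$; this is the step that will require some care, since applying that lemma rests on controlling $\sup_n[u_n]_{s,N/s}^{N/s}$ below the Trudinger--Moser threshold $\bigl[\tfrac{(N-\mu)\alpha_*}{N\alpha_0}\bigr]^{(N-s)/s}$. I would derive this bound from the hypothesis $m_\tau(a)<\tfrac{s}{N}\bigl[\tfrac{(2N-\mu)\alpha_*}{2N\alpha_0}\bigr]^{(N-s)/s}$ by combining $\mathcal{J}(u_n)\to m_\tau(a)$ with $P(u_n)\to0$ and $(f_3)$ (a linear combination $\mathcal{J}(u_n)-c\,P(u_n)$ with $c>0$ chosen so that the Choquard term has nonnegative coefficient), exactly as in similar Trudinger--Moser threshold arguments. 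As a fall-back, if the limit cannot be passed strongly, Fatou's lemma applied to the nonnegative integrand $(I_\mu\ast F(u_n))F(u_n)$ still yields $\liminf_n\int(I_\mu\ast F(u_n))F(u_n)\,dx\geq\int(I_\mu\ast F(u))F(u)\,dx$, which is enough for the upper bound on $\lambda a^{N/s}$.

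Finally, $(f_3)$ gives $F(t)>0$ for every $t\neq0$, so because $u\not\equiv0$ the set $\{u\neq0\}$ has positive Lebesgue measure and therefore $\int_{\mathbb{R}^N}(I_\mu\ast F(u))F(u)\,dx>0$. Passing to the limit in the displayed identity yields
\begin{equation*}
\lambda a^{N/s}\leq -\frac{2N-\mu}{2s}\int_{\mathbb{R}^N}(I_\mu\ast F(u))F(u)\,dx<0,
\end{equation*}
and since $a^{N/s}>0$ and $2N-\mu>0$, the conclusion $\lambda<0$ follows. The only real obstacle is the first item of the middle paragraph, namely the verification that the Trudinger--Moser threshold condition of Lemma \ref{strong2} is met; everything else is an algebraic manipulation plus the sign information coming from $(f_3)$.
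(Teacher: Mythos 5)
Your proposal is correct and, in its essential form, coincides with the paper's proof: the same cancellation of $[u_n]_{s,N/s}^{N/s}$ and $\int(I_\mu*F(u_n))f(u_n)u_n\,\mathrm{d}x$ between \eqref{bdd1} and $P(u_n)\to0$ yields $\lambda_n a^{N/s}=-\frac{2N-\mu}{2s}\int(I_\mu*F(u_n))F(u_n)\,\mathrm{d}x+o_n(1)$, and the paper then concludes exactly via your ``fall-back,'' i.e.\ Fatou's lemma applied to the nonnegative integrand together with $u\neq0$ from Lemma \ref{non} and $F>0$ from $(f_3)$. Your worry about verifying the Trudinger--Moser threshold of Lemma \ref{strong2} is therefore moot here, since only the one-sided Fatou estimate is needed.
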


\begin{proof}
Combining \eqref{bdd1} with $P(u_n)\rightarrow0$ as $n\rightarrow\infty$, we get
\begin{align*}
\lambda_na^{N/s}=-\frac{2N-\mu}{2s}
\int_{\mathbb{R}^N}(I_\mu*F(u_n))F(u_n)\mathrm{d}x+o_n(1).
\end{align*}
Thanks to $u_n\rightharpoonup u\neq0$ in $W^{s,N/s}(\mathbb{R}^N)$, by $(f_3)$ and Fatou Lemma, we obtain
\begin{align*}
\limsup\limits_{n\rightarrow\infty}\lambda_n\leq -\frac{2N-\mu}{2sa^{N/s}}
\int_{\mathbb{R}^N}(I_\mu*F(u_n))F(u_n)\mathrm{d}x\leq -\frac{2N-\mu}{2sa^{N/s}}
\int_{\mathbb{R}^N}(I_\mu*F(u))F(u)\mathrm{d}x<0.
\end{align*}
Therefore, up to a subsequence, we can assume that $\lambda_n\to\lambda<0$ as $n\to\infty$.
\end{proof}




\section{{\bfseries Proof of the main result}}\label{proof}

\noindent{\bfseries Proof of Theorem \ref{thm1.1}.} Under the assumptions of Theorem \ref{thm1.1}, from \eqref{key}, \eqref{1n}, Lemmas \ref{strong2}, \ref{energy}, \ref{non} and \ref{ne},  we can see that there exists $\tau_*>0$ such that for any $\tau\geq \tau_*$, $u$ is a nontrivial weak solution of $(\ref{pro1})$-\eqref{pro2} with $\lambda<0$. Thus
\begin{equation}\label{con1}
  [u]_{s,N/s}^{N/s}=\lambda\|u\|_p^p+\int_{\mathbb{R}^N}(I_\mu*F(u))f(u)u\mathrm{d}x.
\end{equation}
From \eqref{key} and $\lambda_n\rightarrow\lambda$ as $n\rightarrow\infty$, we also have
\begin{equation}\label{con2}
  [u_n]_{s,N/s}^{N/s}=\lambda\|u_n\|_p^p+\int_{\mathbb{R}^N}(I_\mu*F(u_n))f(u_n)u_n\mathrm{d}x.
\end{equation}
Combining \eqref{con1}, \eqref{con2} with Lemma \ref{strong2}, we immediately obtain $u_n\rightarrow u$ in $W_{rad}^{s,N/s}(\mathbb{R}^N)$ as $n\rightarrow\infty$. This completes the proof.
    \qed

\end{document}